\documentclass[12pt]{amsart}

\textwidth 15.5cm \textheight 19cm
\oddsidemargin 0cm \evensidemargin 0cm \topmargin 0cm
\usepackage{amsfonts, amssymb, curves, epic, graphicx, amsmath,verbatim, amscd}

\newcommand{\R}{\mathbb{R}}
\newcommand{\N}{\mathbb{N}}
\newcommand{\Q}{\mathbb{Q}}
\newcommand{\Z}{\mathbb{Z}}

\newcommand{\larr}{\left( \begin{array}{c}}
\newcommand{\rarr}{\end{array} \right) }

\newcommand{\lsqarr}{\left[ \begin{array}{c}}
\newcommand{\rsqarr}{\end{array} \right]}

\newcommand{\inv}{\varprojlim}

\newtheorem{theorem}{Theorem}[section]
\newtheorem{Theorem}[theorem]{Theorem}

\newtheorem{corollary}[theorem]{Corollary}
\newtheorem{Remark}[theorem]{Remark}
\newtheorem{lemma}[theorem]{Lemma}

\begin{document}

\title{Pure discrete spectrum for a class of one-dimensional substitution tiling systems}
\subjclass[2010]{Primary: 37B05, 37B50;
Secondary: 11A55, 11A63. }
\keywords{Substitution, Tiling space, discrete spectrum, maximal equicontinuous factor.}
\author{Marcy Barge} 
\address{Montana State University, Bozeman, MT, 59717}
\email{barge@math.montana.edu}

\date{\today} 

\begin{abstract}
We prove that if a primitive and non-periodic substitution is injective on initial letters, constant on final letters, and has Pisot inflation, then the 
$\R$-action on the corresponding tiling space has pure discrete spectrum. As a consequence, all $\beta$-substitutions for $\beta$ a Pisot simple Parry number have tiling dynamical systems with pure discrete spectrum, as do the Pisot systems arising, for example, from substitutions associated with the Jacobi-Perron and Brun continued fraction algorithms.
\end{abstract}

\maketitle

\tableofcontents

\section{Introduction}\label{intro} 
Substitution dynamical systems arise in materials science (when analyzing spectral properties of atomic arrangements), in number theory (when investigating arithmetical properties of expansions in various bases), in hyperbolic dynamics (in the construction of Markov partitions for toral automorphisms), in computer science, word combinatorics, and, generally, when considering hierarchical, self-similar structures. A natural and effective approach to understanding such a system is to compare it with an action by translation on a compact abelian group: in the `best-behaved' case, when the system is measurably isomorphic with such an action, the system is said to have {\em pure discrete spectrum}. 
\\
There are two dynamical systems commonly associated with a substitution $\phi$: a subshift denoted by $(\Sigma_{\phi},\Z)$ called the {\em substitutive system}; and the {\em tiling dynamical system} $(\Omega_{\phi},\R)$, which is a particular suspension of the substitutive system. Pure discreteness of the substitutive system implies pure discreteness of the tiling dynamical system, but the converse does not hold without additional conditions (see \cite{CS} and also \cite{BBK}, Section 8, for a description of the substitutive system when the tiling system has pure discrete spectrum). In this article we deal exclusively with the tiling dynamical system - it has the important advantage of supporting a substitution induced homeomorphism $\Phi$ which interacts with the translation $\R$-action via $\Phi(T-t)=\Phi(T)-\lambda t$ for tilings $T\in\Omega_{\phi}$ and real numbers $t$. Here $\lambda>1$, the {\em inflation} associated with $\phi$, is the largest eigenvalue of the {\em abelianization}, or {\em substitution matrix},  $A$ of $\phi$ whose $ij$-th entry is the number of times the $i$-th letter appears in the image under $\phi$ of the $j$-th letter. In order for the tiling dynamical system $(\Omega_{\phi},\R)$ to have pure discrete spectrum it is necessary that the inflation $\lambda$ be a Pisot number (an algebraic integer, all of whose other algebraic conjugates lie strictly inside the unit circle) (see \cite{S1}). A substitution with Pisot inflation is called a {\em Pisot substitution} and a substitution is said to be {\em irreducible} if the characteristic polynomial of its substitution matrix is irreducible over $\Q$. We will denote the alphabet of a substitution by $\mathcal{A}$.
\\
There are various algorithms for determining whether or not a given substitution dynamical system has pure discrete spectrum (see, in particular, \cite{AL1,AL2}); these may involve a considerable amount of computation and it is desirable to have, instead, simple and easily checked conditions that guarantee pure discrete spectrum.
It is conjectured, for example, that the system associated with an irreducible Pisot substitution has pure discrete spectrum (this is known as the {\em Pisot substitution conjecture}, see \cite{ABBLS}), but this has only been verified for substitutions on two letters (\cite{HS}). It happens that many substitutions $\phi$ that arise in applications have the following special form: every letter of the alphabet $\mathcal{A}$ occurs as a first letter of a word $\phi(a)$ for some $a\in\mathcal{A}$ and all such words end in the same letter. It is the main theorem of this article that the tiling dynamical system of any (primitive) Pisot substitution with this property has pure discrete spectrum.
\\

Given a substitution $\phi$ on the finite alphabet $\mathcal{A}$, say $\phi(a)=a_1\cdots a_{t(a)}$ for $a\in\mathcal{A}$, we will say that $\phi$ is {\em injective on initial letters} if the map $a\mapsto a_1$ is injective, and we will say that $\phi$ is {\em constant on final letters} if $a_{t(a)}=b_{t(b)}$ for all $a,b\in\mathcal{A}$. Examples of families of substitutions that are both injective on initial letters and constant on final letters include (powers of) $\beta$-substitutions for $\beta$ a simple Parry number, and substitutions arising in connection with continued fraction algorithms (see Section \ref{applications}). The following theorem is the main result of this article.
\\
\\
{\bf Theorem} (Theorem \ref{main theorem} in text): If $\phi$ is a primitive Pisot substitution that is injective on initial letters and constant on final letters then the tiling dynamical system $(\Omega_{\phi},\R)$ has pure discrete spectrum.
\\
\\

In outline, the argument will be as follows:
\begin{enumerate}

\item Every tiling dynamical system $(\Omega,\R)$ has a {\em maximal equicontinuous factor} $(\Omega_{\max},\R)$ and $(\Omega,\R)$ has pure discrete spectrum if and only if the factor map $\pi_{max}:\Omega\to\Omega_{max}$ is a.e. one-to-one. \footnote{If $\Omega=\Omega_{\phi}$ with $\phi$ Pisot, then $(\Omega_{max},\R)$ is a Kronecker action on a torus or solenoid of dimension equal to the algebraic degree of the inflation $\lambda$ (\cite{BKw,BBK}).}
\item Given $T,T'\in\Omega_{\phi}$ we write $T\approx_s T'$ provided $\pi_{max}(T)=\pi_{max}(T')$ and, for a dense set of $t\in\R$, there is $n(t)\in\N$ so that so that $\Phi^{n(t)}(T-t)$ and $\Phi^{n(t)}(T'-t)$ have exactly the same tiles at the origin. The relation $\approx_s$ is a closed equivalence relation.
\item If the system $(\Omega_{\phi},\R)$ does not have pure discrete spectrum,
the quotient system $(\Omega_{\phi}/\approx_s,\R)$ is isomorphic with a tiling dynamical system $(\Omega_{\phi_s},\R)$ for a primitive, non-periodic Pisot substitution $\phi_s$ that is also injective on initial letters and constant on final letters. The relation $\approx_s$ is trivial on $\Omega_{\phi_s}$.
\item If $\psi$ is a primitive, non-periodic, Pisot substitution that is injective on initial letters and constant on final letters, then $\approx_s$ is nontrivial on $\Omega_{\psi}$.
\end{enumerate}
The proof now follows by contradiction: Were $(\Omega_{\phi},\R)$ not to have pure discrete spectrum, the relation $\approx_s$ would be trivial on $\Omega_{\phi_s}$. But item (4) would apply with $\psi=\phi_s$ to say that $\approx_s$ is nontrivial on $\Omega_{\phi_s}$.
\\
\\
Many of the ingredients of the proof have been developed elsewhere: (1) holds quite generally and is a consequence of the Halmos - von Neumann theory (see Chapter 3 of \cite{W}, for example); that $\Phi$ is a homeomorphism is a result of Moss\'{e} (\cite{M}, or, more generally, \cite{Sol}); much of (2) and (3) appears in \cite{B} in arbitrary dimension; and (4) is derived from the main idea of \cite{BD}. The details are presented in Section \ref{proof} and applications are given in Section \ref{applications}.

\section{Background and Notation}\label{background}
Given an alphabet $\mathcal{A}$, $\mathcal{A}^*$ will denote the set of all finite, nonempty, words in $\mathcal{A}$.
A substitution $\phi:\mathcal{A}=\{1,\ldots,d\}\to \mathcal{A}^*$, $d>1$, is {\em primitive} if some power of its abelianization $A$ is strictly positive. In this case $A$ has a unique (up to scale) positive left eigenvector $\omega=(\omega_1,\ldots,\omega_d)$. The corresponding Perron-Frobenius eigenvalue $\lambda$  is called the {\em inflation} of $\phi$. The {\em prototiles} for $\phi$ are the labeled intervals $\rho_i:=([0,\omega_i],i)$.
(For convenience, we will sometimes confuse $\rho_i$ with $[0,\omega_i]\times\{i\}$.) A {\em tile} is a translate of a prototile, $\rho_i-t:=([-t,\omega_i-t],i)$, with {\em support} $spt(\rho_i-t):=[-t,\omega_i-t]$ and {\em type} $i$. A {\em tiling} is a collection $T$ of tiles whose supports cover $\R$ with the property that any two distinct tiles in $T$ have supports with disjoint interiors. A {\em patch} $P$ is a finite subset of a tiling with {\em support} equal to the union of the supports of its constituent tiles. Given a tile $\tau=\rho_i-t$, let $\Phi(\tau)$ be the patch $$\Phi(\tau)=\{\tau_1,\ldots,\tau_k\},$$
with $$\tau_j:=\rho_{i_j}-\lambda t+\sum_{l=1}^{j-1}\omega_{i_l},$$
where $\phi(i)=i_1\cdots i_k$. Extend $\Phi$ to patches $P$ by $\Phi(P):=\cup_{\tau\in P}\Phi(\tau)$, and likewise to tilings. Notice that $spt(\Phi(P))=\lambda\cdot spt(P)$.
\\
Given a primitive substitution $\phi$ as above, there are $k\in\N$ and $a,b\in \mathcal{A}$ so that $\phi^k(a)=a\cdots$, $\phi^k(b)=\cdots b$, and $ba$ is in the {\em language}, $\mathcal{L}:=\{w: w \text{ is a factor of }\phi^n(i)\text{ for some }n\in\N \text{ and }i\in\mathcal{A}\}$, of $\phi$. Let $P=\{\rho_b-\omega_b,\rho_a\}$ and let $T=\cup_{m\in\N}\Phi^{mk}(P)$. Then $T$ is a tiling and $\Phi^k(T)=T$. The {\em tiling space} of $\phi$, or {\em hull} of $T$, is $$\Omega_{\phi}:=cl\{T-t:t\in\R\},$$
with the closure taken in the {\em local topology} in which two tilings are close if a small translate of one agrees with the other in a large neighborhood of the origin. This is a metric topology (we will denote the metric by $d$),
in which $\Omega_{\phi}$ is compact and connected. Furthermore, the space $\Omega_{\phi}$ does not depend on $T$ and the $\R$-action by translation, $(S,t)\mapsto S-t$, on $\Omega_{\phi}$ is minimal and uniquely ergodic (see, for example, \cite{AP}).
\\

 For $\phi$ that is primitive and non-periodic (that is, there are no translation periodic tilings in $\Omega_{\phi}$), the map $\Phi:\Omega_{\phi}\to \Omega_{\phi}$ is a homeomorphism
(\cite{M},\cite{Sol}) satisfying $\Phi(T-t)=\Phi(T)-\lambda t$ for all $T\in\Omega_{\phi},\, t\in\R$.
\\
Given $T\in\Omega_{\phi}$ and $R\ge0$, the {\em  $R$-patch of $T$ at 0} is defined as $$B_R[T]:=\{\tau\in T:spt(\tau)\cap \bar{B}_R(0)\ne\emptyset\}.$$
(Here $\bar{B}_R(0)$ is the closed ball of radius $R$ at the origin.) Thus $d(T,T')$ is small if there is a $t$ with $|t|$ small, and a large $R$, so that $B_R[T]=B_R[T'-t]$.
\\
The substitution $\phi$ is a {\em Pisot substitution} if its inflation $\lambda$ is a Pisot number. For such $\phi$,
the maximal equicontinuous factor $(\Omega_{\max},\R)$ of the tiling dynamical system $(\Omega_{\phi},\R)$, which is unique up to topological isomorphism, is a (non-trivial) torus or solenoid of dimension equal to the algebraic degree of $\lambda$, the factor map $\pi_{max}:\Omega_{\phi}\to\Omega_{max}$ is uniformly finite-to-one, measure preserving (with respect to the unique invariant measure on $\Omega_{\phi}$ and Haar measure on $\Omega_{max}$), and almost everywhere $r$-to-one for an $r<\infty$ called the {\em coincidence rank} of $\phi$ (\cite{BKw,BBK}). The system $(\Omega_{\phi},\R)$ has pure discrete spectrum if and only if the coincidence rank, $r$, equals 1.
Besides semi-conjugating $\R$-actions, $\pi_{max}$ also semi-conjugates $\Z$-actions: there is a hyperbolic and algebraic automorphism $\Phi_{max}:\Omega_{max}\to\Omega_{max}$ with $\pi_{max}\circ\Phi=\Phi_{max}\circ\pi_{max}$. 

Tilings $T$ and $T'$ are {\em proximal} if $inf_{t\in\R}d(T-t,T'-t)=0$ and {\em strongly proximal} if for all $R>0$ there is $t_R\in\R$ with $B_R[T-t_R]=B_R[T'-t_R]$. These two relations are equal for Pisot $\phi$ (this is a consequence of the `Meyer property' - see \cite{BK}) and clearly $\pi_{max}(T)=\pi_{max}(T')$ if $T$ and $T'$ are proximal.
The equicontinuous structure relation for Pisot $\phi$ is {\em strong regional proximality}, $\sim_{srp}$, defined by: $T\sim_{srp}T'$ if and only if for each $R>0$ there  are $S_R,S'_R\in\Omega_{\phi}$ and $t_R\in\R$ so that $B_R[T]=B_R[S],\, B_R[T']=B_R[S'_R]$, and $B_R[S_R-t_R]=B_R[S'_R-t_R]$. That is, $\pi_{max}(T)=\pi_{max}(T')$ if and only if $T\sim_{srp}T'$ (see \cite{BK}). 
\\

\section{Proof of the Main Theorem}\label{proof}

Suppose that $\phi$ is a (primitive and non-periodic) Pisot substitution on the alphabet $\mathcal{A}=\{1,\ldots,d\}$. Tilings $T,T'\in\Omega_{\phi}$ are {\em eventually coincident at $t\in\R$} if there is $n\in\N$ so that $B_0[\Phi^n(T-t)]=B_0[\Phi^n(T'-t)]$. Note that if $T,T'$ are eventually coincident at $t$, then there is $\epsilon>0$ so that $T,T'$ are eventually coincident at $t'$ for all $t'\in(t-\epsilon,t+\epsilon)$. We will say that $T$ and $T'$ are {\em densely eventually coincident} if $T,T'$ are eventually coincident at $t$ for a set of $t$ dense in $\R$ (hence for an open dense set of $t$).
Define $\approx_s$ on $\Omega_{\phi}$ by $T\approx_s T'$ if and only if
$T$ and $T'$ are strongly regionally proximal and densely eventually coincident. 
\\
The following theorem is a compilation of one-dimensional restrictions of several results of \cite{B}. We include a proof for completeness.

\begin{Theorem}\label{quotient of Omega} $\approx_s$ is a closed equivalence relation, invariant under translation and under $\Phi$, and $(\Omega_{\phi}/\approx_s,\R)$ is the maximal equicontinuous factor of $(\Omega_{\phi},\R)$ if and only if $(\Omega_{\phi},\R)$ has pure discrete spectrum. The maximal equicontinuous factor map
on $\Omega_{\phi}$ factors through the quotient map from $\Omega_{\phi}$ to $\Omega_{\phi}/\approx_s$ and that quotient map is a.e. one-to-one. In particular, if $(\Omega_{\phi},\R)$ does not have pure discrete spectrum, then neither does $(\Omega_{\phi}/\approx_s,\R)$.
\end{Theorem}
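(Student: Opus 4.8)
The plan is to handle the two defining clauses of $\approx_s$---strong regional proximality $\sim_{srp}$ and dense eventual coincidence---separately, exploiting that $\sim_{srp}$ is already identified as the fibre relation $\pi_{max}(T)=\pi_{max}(T')$. The first thing I would record is a \emph{persistence lemma}: since $\Phi$ acts on a tile only through its type and position, the origin tile(s) of $\Phi^n(T-t)$ determine the entire patch that $\Phi$ assigns to them, and $0\in spt(\Phi(\tau))$ exactly when $0\in spt(\tau)$; hence if $B_0[\Phi^n(T-t)]=B_0[\Phi^n(T'-t)]$ then the same equality holds at every level $n'\ge n$. Writing $C(T,T')$ for the (open) set of $t$ at which $T,T'$ are eventually coincident, reflexivity and symmetry of $\approx_s$ are immediate, and transitivity follows because $\sim_{srp}$ is transitive while, for the coincidence clause, on the open dense set $C(T,T')\cap C(T',T'')$ (Baire) I take at each $t$ the larger of the two witnessing levels and apply the persistence lemma. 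For invariance I would use $\pi_{max}(T-s)=\pi_{max}(T)-s$ and $\pi_{max}\circ\Phi=\Phi_{max}\circ\pi_{max}$ (with $\Phi_{max}$ a homeomorphism) for the $\sim_{srp}$ clause, and the identities $(T-s)-(t-s)=T-t$ and $\Phi^n(\Phi T-t)=\Phi^{n+1}(T-t/\lambda)$ for the coincidence clause; these give $C(T-s,T'-s)=C(T,T')-s$ and $C(\Phi T,\Phi T')=\lambda\,C(T,T')$, both of which preserve density.

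For closedness I would again split: $\sim_{srp}$ is closed as the preimage of the diagonal under the continuous map $\pi_{max}\times\pi_{max}$, so the issue is that dense eventual coincidence passes to limits. Given $(T_k,T_k')\to(T,T')$ with each pair densely eventually coincident, I would fix an interval $I$, pick parameters $t_k\in I$ interior to the relevant tiles with $B_0[\Phi^{n_k}(T_k-t_k)]=B_0[\Phi^{n_k}(T_k'-t_k)]$, and try to extract a finite-level coincidence for $(T,T')$ in $I$. The technical difficulty is that the witnessing levels $n_k$ may be unbounded, so the coincidence can retreat to arbitrarily fine scale; I would control this using the Meyer property and linear repetitivity available for primitive Pisot substitutions (via \cite{BK}), which bound, uniformly in the level, the return gaps of a fixed coincidence and thereby permit a compactness argument to produce a finite-level coincidence point that the persistence lemma and the invariance computations above then spread densely.

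Because $T\approx_s T'\Rightarrow\pi_{max}(T)=\pi_{max}(T')$, the map $\pi_{max}$ is constant on $\approx_s$-classes and factors as $\pi_{max}=\bar\pi\circ q$, where $q:\Omega_\phi\to\Omega_\phi/\!\approx_s=:\Omega_s$ is the quotient map (onto a compact metric space carrying a minimal, uniquely ergodic $\R$-action, since $\approx_s$ is closed and invariant) and $\bar\pi$ is a continuous equivariant surjection onto $\Omega_{max}$; a short maximality argument then shows $\Omega_{max}$ is also the maximal equicontinuous factor of $\Omega_s$. The heart of the theorem is that $q$ is almost everywhere one-to-one. With $r$ the coincidence rank, this says that for a.e. $T$ no tiling $T'\ne T$ in the fibre $\pi_{max}^{-1}(\pi_{max}(T))$ is densely eventually coincident with $T$: although distinct fibre-mates do share individual tiles here and there, the set of positions at which they coincide after inflation is generically \emph{not} dense, so the $r$ sheets stay separated on a set of full measure. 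Establishing this separation is the main obstacle, and it is exactly the place where the one-dimensional coincidence-rank analysis of \cite{B} (building on \cite{BKw,BBK}) is needed; I would import it rather than reprove it.

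Granting that $q$ is a.e. one-to-one, the remaining assertions are formal. Since $q$ is measure preserving (unique ergodicity), an a.e. one-to-one measure-preserving factor map is a measurable isomorphism, so $(\Omega_\phi,\R)$ and $(\Omega_s,\R)$ have the same spectral type; in particular non-pure-discreteness is inherited, which is the final ``in particular'' clause. For the central equivalence I read ``$\Omega_s$ is the maximal equicontinuous factor'' as ``$\bar\pi$ is injective,'' i.e. $\approx_s\,=\,\sim_{srp}$. If the two relations agree, the fibres of $q$ are the $\sim_{srp}$-classes, which generically have $r$ elements; since $q$ is a.e. one-to-one this forces $r=1$, i.e. pure discrete spectrum. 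Conversely, when $r=1$ the map $\pi_{max}$ is a.e. one-to-one and the coincidence-rank-one results of \cite{BK} make every $\sim_{srp}$ pair strongly proximal; strong proximality yields, for each $R$, a translate $t_R$ with $B_R[T-t_R]=B_R[T'-t_R]$ and hence an honest level-$0$ coincidence at $t_R$, which I would upgrade to dense eventual coincidence using repetitivity, giving $\sim_{srp}\subseteq\approx_s$ and hence equality. Thus $\Omega_s$ is the maximal equicontinuous factor precisely when $(\Omega_\phi,\R)$ has pure discrete spectrum, completing the plan.
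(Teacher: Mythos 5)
Your overall architecture (split $\approx_s$ into $\sim_{srp}$ plus dense eventual coincidence, treat the clauses separately, use the coincidence rank $r$) matches the paper's, and your persistence lemma, transitivity, and invariance computations are correct and essentially what the paper does. But there are two genuine gaps. First, in the closedness argument you misdiagnose the difficulty. Unbounded witnessing levels $n_k$ are a non-issue: eventual coincidence at $t$ is a \emph{local, level-free} property of the pair of tiles of $T$ and $T'$ at $t$ (since $B_0[\Phi^n(T-t)]$ is determined by the tile of $T$ containing $t$), so once the approximating patches stabilize, the coincidence sets transfer verbatim --- no uniformity in $n$ and no compactness argument is needed. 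The real obstacle, which your sketch never addresses, is that $T_k\to T$ and $T'_k\to T'$ only give $B_R[T]=B_R[T_k-t_k]$ and $B_R[T']=B_R[T'_k-t'_k]$ with \emph{a priori different} small translates $t_k\ne t'_k$; if they differ, density of coincidences for $(T_k,T'_k)$ says nothing about $(T,T')$. The paper kills this with Corollary 5.8 of \cite{BK} (up to translation there are only finitely many pairs $(B_R[S],B_R[S'])$ with $S\sim_{srp}S'$), which forces $t_k=t'_k$ for large $k$. Your alternative mechanism --- extract one finite-level coincidence point and ``spread it densely'' by persistence and invariance --- cannot work: persistence only propagates a coincidence upward in level at the \emph{same} $t$ (and to a small neighborhood), and repetitivity of the individual tilings does not make the \emph{pair} $(T,T')$ repetitive, so a single coincidence cannot be spread densely.

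Second, both halves of the central equivalence are under-proved. For the direction ``pure discrete $\Rightarrow\ \sim_{srp}\subseteq\ \approx_s$,'' your step from strong proximality ($B_R[T-t_R]=B_R[T'-t_R]$ at uncontrolled locations $t_R$) to \emph{dense} eventual coincidence via ``repetitivity'' is exactly the same gap as above: coincidences of the pair at far-away positions do not recur near an arbitrary $t_0$. The paper instead argues by contradiction: if $T,T'$ are nowhere eventually coincident on an interval about $t_0$, pass to limits $S=\lim\Phi^{n_i}(T-t_0)$, $S'=\lim\Phi^{n_i}(T'-t_0)$, show $B_0[S-t]\ne B_0[S'-t]$ for all $t$ (otherwise the finiteness of $\sim_{srp}$-pair-patches pulls a coincidence back into the forbidden interval at $t_0+t_i/\lambda^{n_i}$), conclude $d(S-t,S'-t)$ is bounded away from $0$, and then minimality makes $\pi_{max}$ everywhere at least $2$-to-$1$, contradicting a.e.\ injectivity. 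Finally, you import the a.e.\ injectivity of the quotient map $q$ wholesale from \cite{B}; since that is the heart of the theorem (and the paper, though crediting \cite{B}, proves it), you should at least know the argument: with $r\ge2$ one finds $T_1,\ldots,T_r$ in a single fiber that are \emph{pairwise disjoint}, non-proximality is preserved by $\Phi^{\pm1}$, and Lemma 5.12 of \cite{BK} then keeps $\Phi^n(T_i)\cap\Phi^n(T_j)=\emptyset$ for all $n$ --- so these tilings are nowhere (not just non-densely) eventually coincident, the induced map $\bar\pi:\Omega_\phi/\!\approx_s\to\Omega_{max}$ is at least $r$-to-$1$ \emph{everywhere}, and since $\pi_{max}=\bar\pi\circ q$ is a.e.\ exactly $r$-to-$1$, $q$ is a.e.\ $1$-to-$1$. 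Your closing formal deductions (measurable isomorphism, inheritance of non-pure-discreteness, and $r=1$ forced when $\bar\pi$ is injective) are fine once these gaps are filled.
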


\begin{proof} As mentioned in the previous section, $T\sim_{srp}T'$ is equivalent to $\pi_{max}(T)=\pi_{max}(T')$, so $\sim_{srp}$ is a closed equivalence relation. That `$T$ is densely eventually coincident with $T'$' really means `$T$ is open-and-densely eventually coincident with $T'$' implies that dense eventual coincidence is an equivalence relation, so $\approx_s$ is an equivalence relation. To see that it's closed, suppose that $T_n\to T$, $T'_n\to T'$ and $T_n\approx_s T'_n$ for each $n$. Then $T\sim_{srp}T'$. Fix $R>0$ and let $t_n,t'_n\to 0$ be so that (for large $n$) $B_R[T]=B_R[T_n-t_n]$ and $B_R[T']=B_R[T'_n-t'_n]$. By Corollary 5.8 of  \cite{BK} there are, up to translation, only finitely many pairs of the form $(B_R[S],B_R[S'])$ with $S\sim_{srp}S'$. As $T_n\sim_{srp}T'_n$, it follows that $t_n=t'_n$ for all but finitely many $n$. Since, for $|t|<R$, eventual coincidence of $T_n$ and $T'_n$ at $t$ depends only on the $R$-patches of $T_n$ and $T'_n$ at the origin, and $T_n,T'_n$ are densely eventually coincident, we see that $T$ and $T'$ are eventually coincident at a set of $t$ dense in $B_R(0)$. Thus $T\approx_sT'$ and $\approx_s$ is a closed equivalence relation.
\\
The $\Phi$- and $\R$-invariance of $\approx_s$ is clear. Also, since the relation $\approx_s$ is contained in the relation $\sim_{srp}$, $\pi_{max}$ factors through $\Omega_{\phi}/\approx_s$.
\\
Suppose now that $(\Omega_{\phi},\R)$ has pure discrete spectrum. We will argue that $(\Omega_{\phi}/\approx_s,\R)$ is the maximal equicontinuous factor of $(\Omega_{\phi},\R)$ by showing that strong regional proximality implies dense eventual coincidence. Suppose that $T\sim_{srp}T'$ and suppose that there are $t_0\in\R$ and $\epsilon>0$ so that $T$ and $T'$ are not eventually coincident at any $t\in(t_0-\epsilon,t_0+\epsilon)$. Pick $n_i\to\infty$ with $\Phi^{n_i}(T-t_0)\to S\in\Omega_{\phi}$ and $\Phi^{n_i}(T'-t_0)\to S'\in\Omega_{\phi}$. Then $S\sim_{srp}S'$. Suppose that $B_0[S-t]=B_0[S'-t]$ for some $t\in\R$. There are then (for large $i$) $t_i\to t$ with $B_0[\Phi^{n_i}(T-t_0)-t_i]=B_0[S-t]=B_0[S'-t]=B_0[\Phi^{n_i}(T'-t_0)-t_i]$ (we have again used that there are, up to translation, only finitely many pairs $(B_0[V],B_0[V'])$ with $V\sim_{srp}V'$). Then $B_0[\Phi^{n_i}(T-t_0-(t_i/\lambda^{n_i}))]=B_0[\Phi^{n_i}(T'-t_0-(t_i/\lambda^{n_i}))]$ so that $T$ and $T'$ are eventually coincident at $t_0+(t_i/\lambda^{n_i})$. We have a contradiction when $i$ is large enough so that $t_i/\lambda^{n_i}<\epsilon$. Thus $B_0[S-t]\ne B_0[S'-t]$ for all $t\in\R$. As there are only finitely many pairs $(B_0[S-t],B_0[S'-t])$, up to translation, we see that $d(S-t,S'-t)$ is bounded away from 0. Then by minimality of the $\R$-action and closed-ness of $\sim_{srp}$, $\pi_{max}:\Omega_{\phi}\to\Omega_{max}$ is at least 2-to-1 everywhere. But $\pi_{max}$ is a.e. 1-to-1 since $(\Omega_{\phi},\R)$ has pure discrete spectrum. Hence $T$ must be densely eventually coincident with $T'$. This proves that if $(\Omega_{\phi},\R)$ has pure discrete spectrum then $\sim_{srp}$ is the same as $\approx_s$; that is, $(\Omega_{\phi}/\approx_s,\R)$ is the maximal equicontinuous factor.
\\
Conversely, suppose that $(\Omega_{\phi},\R)$ does not have pure discrete spectrum. Then the coincidence rank of $\phi$ is $r\ge2$ and there are $z\in \Omega_{max}$ and $T_1,\ldots,T_r\in \pi_{max}^{-1}(z)$ with $T_i$ and $T_j$ disjoint for $i\ne j$ (this is from \cite{BK} - see Theorem 4 of \cite{B}). By minimality (and the fact that, up to translation, there are only finitely many pairs $(B_0[T_i-t],B_0[T_j-t])$), this is true for all $z\in\Omega_{max}$. Such $T_i$ and $T_j$ are not proximal, and therefore $\Phi^n(T_i)$ and $\Phi^n(T_j)$ are not proximal for any $n\in\N$ (this is because $\Phi$ is a homeomorphism with $\Phi(S-t)=\Phi(S)-\lambda t$, so $\Phi^{n}$ preserves proximality for all $n\in\Z$). Then by Lemma 5.12 of \cite{BK}, $\Phi^n(T_i)$ and $\Phi^n(T_j)$ are disjoint for all $n\in\N$ and $i\ne j$. It follows that the factor map from $\Omega_{\phi}/\approx_s$ to $\Omega_{max}$ is at least $r$-to-1 everywhere.  Since $\pi_{max}$ is almost everywhere $r$-to-1, the quotient map from $\Omega_{\phi}$ to $\Omega_{\phi}/\approx_s$ must be almost a.e. 1-to-1 (and, furthermore, $(\Omega_{\phi}/\approx_s,\R)$ is not the maximal equicontinuous factor).
\end{proof}

Assume now that $(\Omega_{\phi},\R)$ does not have pure discrete spectrum. We will push the relation $\approx_s$ down to the (disjoint) union $\cup_{i=1}^d\rho_i$ of the prototiles for $\phi$. Let the relation $R$ on $\cup_{i=1}^d\rho_i$ be defined by $(x,i)R(y,j)$ if and only if there are $T,T'\in\Omega_{\phi}$ and $t,t'\in\R$ so that: $T\approx_s T'$, $t+\rho_i\in T$, $t'+\rho_j\in T'$, and $t+x=t'+y$.

\begin{lemma} \label{epsilon}: Suppose that $\phi$ is Pisot and is constant on final letters. There is then $\epsilon>0$ so that if $(x_n,i_n)R(x_{n+1},i_{n+1})$ for $n=1,\ldots N-1$ and  $P_1$ and $P_N$ are the patches  $P_1=\{\rho_1-\omega_1-x_1,\rho_{i_1}-x_1\}$ and $P_N=\{\rho_1-\omega_1-x_N,\rho_{i_N}-x_N\}$, then $[-\epsilon,0]\subset spt(P_1)\cap spt(P_N)$ and $P_1$ and $P_N$ are eventually coincident at $s$ for a dense set of  $s\in[-\epsilon,0]$.
\end{lemma}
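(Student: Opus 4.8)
The plan is to fix a chain-independent $\epsilon$, reduce the assertion to a statement about single links of the chain, and then patch the links together using the density built into $\approx_s$ together with transitivity of eventual coincidence. Write $1$ for the common final letter (this is the content of \emph{constant on final letters}, after relabeling so as to match the $\rho_1$ in the statement), and for each $n$ set $P_n:=\{\rho_1-\omega_1-x_n,\rho_{i_n}-x_n\}$, a two-tile patch whose right tile carries the origin at relative position $x_n$ and whose cut point lies at $-x_n$. I would take $\epsilon:=\omega_1/\lambda$. The support containment $[-\epsilon,0]\subset spt(P_1)\cap spt(P_N)$ is then immediate: $0\le\omega_{i_n}-x_n$ and $-\epsilon=-\omega_1/\lambda\ge-\omega_1-x_n$ since $\lambda>1$. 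This also places $[-\epsilon,0]$ in the interior of each support, so that eventual coincidence of the patches at $s\in(-\epsilon,0)$ is well defined and agrees with that of any tilings extending them.

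The heart of the matter is a single-prototile synchronization. For each link $(x_n,i_n)\,R\,(x_{n+1},i_{n+1})$ the relation produces tilings $T_n\approx_s T_n'$ which, after a common translation, satisfy $\rho_{i_n}-x_n\in T_n$ and $\rho_{i_{n+1}}-x_{n+1}\in T_n'$ with the shared point at the origin. I claim $P_n$ and $T_n$ are eventually coincident at a dense open set of $s\in(-\epsilon,0)$. For $s\in(-x_n,0)$ this is immediate at level $0$, since the origin-tile of both $P_n$ and $T_n$ is $\rho_{i_n}-x_n$. For $s\in(-x_n-\omega_1/\lambda,-x_n)$ I would use inflation: let $c$ be the type of the tile of $T_n$ immediately left of $\rho_{i_n}-x_n$, with right endpoint $-x_n$. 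Because every $\phi(a)$ ends in $1$, the words $\phi^m(1)$ and $\phi^m(c)$ share the common suffix $\phi^{m-1}(1)$, so $\Phi^m(\rho_1-\omega_1-x_n)$ and $\Phi^m$ of the $c$-tile carry \emph{identical} tiles (same types and positions) throughout $[-\lambda^m x_n-\lambda^{m-1}\omega_1,\,-\lambda^m x_n]$, the support of the image of this common suffix placed to end at $-\lambda^m x_n$. Hence $B_0[\Phi^m(P_n-s)]=B_0[\Phi^m(T_n-s)]$ whenever $\lambda^m s$ is interior to that interval, i.e. for non-cut $s\in(-x_n-\omega_1/\lambda,-x_n)$. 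As $x_n\ge0$, the two ranges of $s$ cover $(-\epsilon,0)$ up to the single point $-x_n$; this is precisely where $\epsilon=\omega_1/\lambda$ comes from, and why it is independent of the chain. The identical argument on the side of $T_n'$ shows $P_{n+1}$ and $T_n'$ are eventually coincident at a dense open set of $s\in(-\epsilon,0)$.

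To finish I would chain the links. Eventual coincidence at a fixed $s$ is transitive: if two patches share their origin-tile after $\Phi^m$, the images of that tile agree under every further inflation, so one may pass to a common level and compose. Since $T_n\approx_s T_n'$, these are densely eventually coincident, hence coincident on a dense open set $U_n\subset\R$. For $s\in U_n\cap(-\epsilon,0)$ off the countable cut-point set, the three eventual coincidences $P_n$–$T_n$, $T_n$–$T_n'$, $T_n'$–$P_{n+1}$ hold at the same $s$, so $P_n$ and $P_{n+1}$ are eventually coincident at $s$. Intersecting these dense open conditions over the finitely many $n=1,\dots,N-1$ leaves a set dense in $(-\epsilon,0)$ on which, by transitivity, $P_1$ and $P_N$ are eventually coincident, which is the conclusion. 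I expect the synchronization step of the second paragraph to be the main obstacle: one must extract the tilings and the neighboring tile type $c$ from $R$, verify that the constant-final-letter property propagates under iteration to force the common suffix $\phi^{m-1}(1)$, and — most delicately — check that the resulting coincidence region has width bounded below by the chain-independent constant $\omega_1/\lambda$, so that a single $\epsilon$ serves every chain. The remaining bookkeeping (transitivity, the finite intersection of dense open sets, the harmless cut-point exclusions) is routine.
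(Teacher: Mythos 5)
Your proposal is correct and follows essentially the same route as the paper: the same choice $\epsilon=\omega_l/\lambda$ (your relabeling of the common final letter as $1$ is harmless, and your common-suffix computation with $\phi^m(1)$ and $\phi^m(c)$ is exactly the paper's observation that right-aligned tiles of any two types are eventually coincident on $(-\omega_l/\lambda,0)$), the same extraction of tilings $T_n\approx_s T_n'$ from each link of $R$, and the same combination of open-dense eventual coincidence with transitivity. The only difference is organizational --- you intersect the finitely many dense open sets over all links at once, where the paper inducts on $N$ one link at a time --- which changes nothing of substance.
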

\begin{proof} Suppose $l\in \mathcal{A}=\{1,\ldots,d\}$ is such that  $\phi(a)=\cdots l$ for all $a\in\mathcal{A}$. Then the tiles $\rho_i-\omega_i$ and $\rho_j-\omega_j$ are eventually coincident at $s$ for all $s\in (-\omega_l/\lambda,0)$ for each $i,j\in\mathcal{A}$. Let $\epsilon=\omega_l/\lambda$. The conclusion of the lemma is certainly true if $N=1$. Suppose it to be true for
some $N\ge1$ and suppose that $(x_n,i_n)R(x_{n+1},i_{n+1})$ for $n=1,\ldots N$. Then $P_1$ and $P_N$ are densely eventually coincident on $[-\epsilon,0]$ and there are $T,T'\in\Omega_{\phi}$ and $t,t'\in\R$ so that: $T\approx_s T'$, $t+\rho_{i_N}\in T$, $t'+\rho_{i_{N+1}}\in T'$, and $t+x_N=t'+x_{N+1}$. Let $\rho_i-\omega_i+t$ and $\rho_j-\omega_j+t'$ be the tiles of $T$ and $T'$ immediately to the left of $t+\rho_{i_N}$ and $t'+\rho_{i_{N+1}}$. Then the patches $P_N':=\{\rho_i-\omega_i+t, t+\rho_{i_N}\}-t-x_N=\{\rho_i-\omega_i-x_N, \rho_{i_N}-x_N\}$ and $P_{N+1}':=\{\rho_j-\omega_j+t', t'+\rho_{i_{N+1}}\}-t-x_N=\{\rho_j-\omega_j-x_{N+1}, \rho_{i_{N+1}}-x_{N+1}\}$ are eventually coincident at $s$ for a dense set of $s$ in the intersection of their supports (because $T\approx_s T'$). But also,
$P_N$ and $P_N'$ are eventually coincident at all $s\in (-\epsilon,0)\subset [-\epsilon-x_N, \omega_{i_N}-x_N]$ as are $P_{N+1}$ and $P_{N+1}'$ at all $s\in (-\epsilon,0)\subset [-\epsilon-x_{N+1},\omega_{i_{N+1}}-x_{N+1}]$. Thus, recalling that dense eventual coincidence implies open dense eventual coincidence, we have that $P_{N+1}$ and $P_1$ are eventually coincident at $s$ for a dense set of in $[-\epsilon,0]$, and the lemma is established by induction.
\end{proof}

Given a patch $Q$ with support $[a,b]$, let $\bar{Q}$ be the periodic tiling $\bar{Q}:=\cup_{k\in\Z}(Q+k(b-a))$.
The main technical tool that we use in this article is the following theorem (which has analogues in all dimensions - see \cite{BSW}, Theorem 3.1).

\begin{theorem}\label{main tool} Suppose that $\phi$ is a primitive Pisot substitution and suppose there is a patch $Q$ for $\phi$ with support $[a,b]$ and a set $W\subset\R$ with the properties: the subgroup of $\R/(b-a)\Z$ generated by $\{w+(b-a)\Z:w\in W\}$ is dense in $\R/(b-a)\Z$; and $\bar{Q}$ and $\bar{Q}-w$ are densely eventually coincident for all $w\in W$. Then $(\Omega_{\phi},\R)$ has pure discrete spectrum.
\end{theorem}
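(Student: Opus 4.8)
The plan is to show that the hypothesis—a patch $Q$ whose periodic tiling $\bar Q$ is densely eventually coincident with its translates $\bar Q-w$ over a set $W$ generating a dense subgroup of $\R/(b-a)\Z$—forces the coincidence rank $r$ of $\phi$ to equal $1$, which by the discussion in Section \ref{background} is equivalent to pure discrete spectrum. The central idea is that dense eventual coincidence is a strong form of proximality that pushes down through $\pi_{max}$, and that the period $(b-a)$ of $\bar Q$ maps, under the maximal equicontinuous factor, to a point whose translates by $W$ fill out a dense subset of a circle (or of the relevant factor of $\Omega_{max}$). Since $W$ generates densely, accumulating these coincidences should let me cover \emph{all} sufficiently small translations, not just those in $W$.

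First I would set up the algebraic picture. Because $\phi$ is Pisot, $(\Omega_{max},\R)$ is a torus or solenoid of dimension $\deg\lambda$, and $\pi_{max}$ is measure preserving and a.e. $r$-to-one. The periodic tiling $\bar Q$ is not itself in $\Omega_{\phi}$, but it serves as a bookkeeping device: the relation ``$\bar Q$ and $\bar Q-w$ are densely eventually coincident'' records, via the recurrence of the patch $Q$ inside genuine tilings $T\in\Omega_{\phi}$, that $T$ and $T-w$ become coincident on the patch level under $\Phi$-inflation at a dense set of positions. I would make this precise by relating eventual coincidence of the periodic tilings to eventual coincidence of actual tilings containing copies of $Q$ at a prescribed position, using primitivity and the recurrence of $Q$ in the language $\mathcal{L}$.

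Next, the engine of the argument. I would argue that the set of $w$ for which $\bar Q$ and $\bar Q-w$ are densely eventually coincident is closed under addition modulo $(b-a)\Z$ and is closed in $\R/(b-a)\Z$, mirroring the transitivity-and-closedness arguments already used for $\approx_s$ in Theorem \ref{quotient of Omega} and in Lemma \ref{epsilon}. Closure would follow from the finiteness (up to translation) of pairs of $R$-patches of strongly regionally proximal tilings (Corollary 5.8 of \cite{BK}), exactly as in the closedness proof for $\approx_s$; additivity would follow by concatenating coincidences, as in the inductive step of Lemma \ref{epsilon}. Granting these two properties, the set of admissible $w$ contains the subgroup generated by $W$, which is dense; being also closed, it is all of $\R/(b-a)\Z$. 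Hence $\bar Q$ and $\bar Q-w$ are densely eventually coincident for \emph{every} $w$, and in particular for a dense set of small $w$. Translating this back into $\Omega_{\phi}$, any two tilings $T,T'$ that agree on a copy of $Q$ up to a translation are densely eventually coincident, which forces $\pi_{max}^{-1}(z)$ to consist of a single $\approx_s$-class carrying no disjoint pair; that is, $r=1$.

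The main obstacle will be the passage from the periodic tiling $\bar Q$, which lives outside $\Omega_{\phi}$, to the genuine tilings in $\Omega_{\phi}$ on which $\pi_{max}$ and the coincidence rank are defined. One must show that dense eventual coincidence of $\bar Q$ with $\bar Q-w$ actually produces, for each $z\in\Omega_{max}$, a coincidence between the distinct preimages $T_i,T_j$ guaranteed when $r\ge 2$, thereby contradicting their disjointness (Lemma 5.12 of \cite{BK}). The delicate point is controlling where the patch $Q$ sits relative to the origin so that the accumulated small-translation coincidences line up the $R$-patches of $T_i$ and $T_j$ at $0$; this requires combining the density of $W$ modulo the period with the minimality of the $\R$-action to reposition an arbitrary preimage pair so that both contain a copy of $Q$ at the same place. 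Once that alignment is achieved, the closedness-and-additivity step does the rest, and the equivalence of pure discrete spectrum with $r=1$ completes the proof.
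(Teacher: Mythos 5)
Your additivity step is sound and matches the paper's opening move: the paper also shows that the set of good translations is closed under differences by intersecting the open dense sets $U_w:=\{x\in\R:\bar{Q}-x \text{ and } \bar{Q}-w-x \text{ eventually coincident at } 0\}$, and it likewise enlarges $W$ to the dense subgroup generated by $W\cup(b-a)\Z$. The fatal flaw is your topological-closure step. You claim the set $G$ of $w$ for which $\bar{Q}$ and $\bar{Q}-w$ are densely eventually coincident is closed in $\R/(b-a)\Z$, hence (being a dense subgroup) all of $\R/(b-a)\Z$. This is false: eventual coincidence of $\bar{Q}-x$ and $\bar{Q}-w-x$ at the origin forces $\lambda^n w$ to be a difference of two tile endpoints of $\Phi^n(\bar{Q})$ for some $n$, and these endpoints lie in a countable set (a finitely generated $\Z$-module shifted by $\lambda^n(b-a)\Z$). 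So $G$ is contained in a countable set and can never exhaust $\R/(b-a)\Z$; a dense countable set is not closed. Your proposed justification via Corollary 5.8 of \cite{BK} does not apply here: that finiteness statement concerns pairs of $R$-patches of strongly regionally proximal tilings \emph{in} $\Omega_{\phi}$, whereas $\bar{Q}$ and $\bar{Q}-w$ are periodic tilings lying outside $\Omega_{\phi}$ and form no srp pair, so there is no finiteness of patch pairs to pin down the translations as in the closedness proof of $\approx_s$ in Theorem \ref{quotient of Omega}. Without the closure step, your argument only yields dense eventual coincidence of $\bar{Q}$ with $\bar{Q}-w$ for $w$ in the subgroup generated by $W$, which is not enough to handle the arbitrary relative translation $s'-s$ that arises when you try to align two fiber tilings.

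The paper's engine is a different, subtler dichotomy, and it operates in the spatial variable $x$ rather than the translation variable $w$: for \emph{each fixed} $v\in\R$, the set $U_v$ is either empty or dense in $\R$. Density is transported from the dense subgroup $W$ by the chain argument: given $x\in U_v$ and $w\in W$, choose $x'\in U_v\cap U_w\cap(U_w-v)$ near $x$ and conclude $x'+w\in U_v$, whence $x+W\subset cl(U_v)$. The endgame then needs $U_v\ne\emptyset$ only for the one specific $v=s'-s$ produced by a strong regional proximality configuration: taking a $\Phi^k$-fixed disjoint pair $T,T'$ with $T\sim_{srp}T'$ (available when the spectrum is not pure discrete), one gets $S,S'$ sharing a patch ($B_0[S-t]=B_0[S'-t]$), embeds the relevant patches of $S,S'$ into translates of $\bar{Q}$ (after replacing $Q$ by $\Phi^m(Q)$ and $W$ by $\lambda^m W$, using primitivity), observes $U_{s'-s}\ne\emptyset$, hence dense, and finds a coincidence point inside the supports of $B_0[T]$ and $B_0[T']$; this yields $\Phi^{nk}(T)\cap\Phi^{nk}(T')\ne\emptyset$, contradicting $T\cap T'=\emptyset$. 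Your closing sketch (coincidence between distinct fiber preimages contradicting disjointness via Lemma 5.12 of \cite{BK}) is the right target, but as written it rests on the false claim that all translates of $\bar{Q}$ are densely eventually coincident, so the proof does not go through without replacing the closure step by the empty-or-dense dichotomy for $U_v$.
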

\begin{proof} Given tilings $T,T'$ made of tiles for $\phi$ (but not necessarily in $\Omega_{\phi}$) let us write $ T\sim T'$ provided $T$ and $T'$ are eventually coincident at 0. For $v\in\R$, let $U_v:=\{x\in\R: \bar{Q}-x\sim\bar{Q}-v-x\}$; $U_v$ is open and $U_w$ is open and dense in $\R$ for all $w\in W$. So if $w,w'\in W$ then $U_w\cap U_{w'}$ is also open and dense in $\R$. For $x\in U_w\cap U_{w'}$ we have $\bar{Q}-w-x\sim\bar{Q}-x\sim\bar{Q}-w'-x$ and hence $\bar{Q}-w$ and $\bar{Q}-w'$ are densely eventually coincident. This means that $\bar{Q}-(w-w')$ and $\bar{Q}$ are densely eventually coincident. Note also that $U_v=\R$ for $v\in (b-a)\Z$. Thus, replacing $W$ by the subgroup of $\R$ generated by $W\cup (b-a)\Z$, we may assume that $W$ is a dense subgroup of $\R$. 
\\
We claim that if $U_v$ is nonempty for some $v\in\R$, then $U_v$ is dense in $\R$. To see this, suppose that $x\in U_v$ and fix $w\in W$. There is then $x'\in U_v\cap U_w\cap(U_w-v)$, $x'$  arbitrarily close to $x$.
Then $\bar{Q}-v-x'\sim \bar{Q}-x'$, $\bar{Q}-w-x'\sim \bar{Q}-x'$, and $\bar{Q}-w-(x'+v)\sim \bar{Q}-(x'+v)$. It follows that $\bar{Q}-v-(x'+w)\sim \bar{Q}-(x'+w)$; i.e., $x'+w\in U_v$. Since $x'$ is as close to $x$ as we wish, this shows that $x+W\subset cl(U_v)$, and thus $U_v$ is dense in $\R$, as claimed.
\\
Suppose now that $(\Omega_{\phi},\R)$ does not have pure discrete spectrum. There are $k\in\N$ and $T,T'\in\Omega_{\phi}$ so that $\Phi^k(T)=T$, $\Phi^k(T')=T'$, $T\sim_{srp}T'$, and $T\cap T'=\emptyset$. There are then $S,S'\in\Omega_{\phi}$ and $t\in \R$ so that $B_0[S]=B_0[T]$, $B_0[S']=B_0[T']$, and $B_0[S-t]=B_0[S'-t]$. Let $P\subset S$ and $P'\subset S'$ be patches with supports containing $[-1, t+1]$. We may assume that $P-s$ and $P'-s'$ are sub patches of $Q$ for some $s,s'\in\R$ (otherwise, replace $Q$ by $\Phi^m(Q)$ for sufficiently large $m$, so that, by primitivity, translates of $P$ and $P'$ occur in $\Phi^m(Q)$, and replace $W$ by $\lambda^mW$). From $S\cap S'\ne\emptyset$ we have $(\bar{Q}-(s'-s))\cap\bar{Q}\ne\emptyset$; so $U_{s'-s}\ne\emptyset$. By the above claim, $U_{s'-s}$ is dense in $\R$. Since $B_0[T']-s'\subset P'-s'\subset \bar{Q}$ and $B_0[T]-s'\subset P-s'\subset \bar{Q}-(s'-s)$, there is $x\in spt(B_0[T']-s')\cap spt(B_0[T]-s')$ with $\bar{Q}$ and $\bar{Q}-(s'-s)$ eventually coincident at $x$. Then $T$ and $T'$ are eventually coincident at $s'+x$. In particular, there is $n\in\N$ with $\Phi^n(T)\cap \Phi^n(T')\ne\emptyset$. Then $\Phi^{nk}(T)\cap \Phi^{nk}(T')\ne\emptyset$. But $\Phi^{nk}(T)=T$ and $\Phi^{nk}(T')=T'$ and $T\cap T'=\emptyset$. Thus $(\Omega_{\phi},\R)$ must have pure discrete spectrum.
\end{proof}

\begin{corollary} \label{corollary} Suppose that $\phi$ is a primitive Pisot substitution and suppose there is a tile $\tau$ for $\phi$ and a sequence $t_n\to 0$, $t_n\ne 0$, so that for each $n$ the set of $x$ so that $\tau$ and $\tau-t_n$ are coincident at $x$ is dense in $spt(\tau)\cap spt(\tau-t_n)$. Then $(\Omega_{\phi},\R)$ has pure discrete spectrum.
\end{corollary}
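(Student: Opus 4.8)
The plan is to read this off from Theorem~\ref{main tool} by taking the patch $Q$ to be the single tile $\{\tau\}$ and the set $W$ to be the sequence $\{t_n\}$. Writing $spt(\tau)=[a,b]$ and $\ell:=b-a$, the tiling $\bar{Q}=\cup_{k\in\Z}(\tau+k\ell)$ is the periodic tiling that repeats $\tau$ with period $\ell$, and the relevant circle is $\R/\ell\Z$. (Here ``coincident at $x$'' must be read as ``eventually coincident at $x$'' in the sense of Section~\ref{proof}; for $t_n\ne0$ two copies of a single tile never literally agree at a point.) It then remains to verify the two hypotheses of Theorem~\ref{main tool}: density in $\R/\ell\Z$ of the subgroup generated by $\{t_n+\ell\Z\}$, and dense eventual coincidence of $\bar{Q}$ and $\bar{Q}-t_n$ for each $n$.

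The subgroup condition is immediate. For $n$ large, $0<|t_n|<\ell$, so the $t_n+\ell\Z$ are nonzero elements of $\R/\ell\Z$ converging to the identity. A closed subgroup of the circle is either finite or all of it, and a finite subgroup admits a positive lower bound on the length of its nonzero elements; hence the closure of the generated subgroup cannot be finite and must be all of $\R/\ell\Z$.

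The substantive step is transferring eventual coincidence from the isolated tiles to their periodic repetitions. Since $\Phi^m(S-t)=\Phi^m(S)-\lambda^m t$, one has $\Phi^m(\bar{Q})=\cup_k(\Phi^m(\tau)+\lambda^m k\ell)$, the $\lambda^m\ell$-periodic repetition of the patch $\Phi^m(\tau)$, together with $\Phi^m(\bar{Q}-t_n)=\Phi^m(\bar{Q})-\lambda^m t_n$. The point is that on the interior of the fundamental domain $spt(\Phi^m(\tau))=[\lambda^m a,\lambda^m b]$ the periodic tiling $\Phi^m(\bar{Q})$ restricts to $\Phi^m(\tau)$, so for $x$ interior to $spt(\tau)\cap spt(\tau-t_n)$ we get $B_0[\Phi^m(\bar{Q})-\lambda^m x]=B_0[\Phi^m(\tau)-\lambda^m x]$ and $B_0[\Phi^m(\bar{Q})-\lambda^m(x+t_n)]=B_0[\Phi^m(\tau)-\lambda^m(x+t_n)]$. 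But eventual coincidence of $\tau$ and $\tau-t_n$ at $x$, witnessed by an exponent $m$, says exactly that the two right-hand sides agree (using $\Phi^m(\tau-t_n)=\Phi^m(\tau)-\lambda^m t_n$); hence the left-hand sides agree, which is eventual coincidence of $\bar{Q}$ and $\bar{Q}-t_n$ at $x$. The hypothesis supplies a dense set of such $x$, and passing to the interior discards only the four endpoints $x\in\{a,b,a-t_n,b-t_n\}$, so density is preserved. Theorem~\ref{main tool} then gives pure discrete spectrum.

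The one genuinely delicate point, which I would write out carefully, is the boundary bookkeeping above: eventual coincidence at a point depends only on the tiles meeting that point, so I must ensure $\lambda^m x$ and $\lambda^m(x+t_n)$ avoid the outer endpoints $\lambda^m a,\lambda^m b$ of the fundamental domain, where the periodic extension introduces a neighboring tile that is absent from the single patch $\Phi^m(\tau)$. Restricting to interior $x$ handles this, and since the exponent $m$ may depend on $x$ this causes no difficulty with the ``dense'' (rather than uniform) conclusion.
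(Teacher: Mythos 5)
Your reduction to Theorem \ref{main tool} with $Q=\{\tau\}$ has a genuine gap at the step ``density is preserved.'' The hypothesis of Theorem \ref{main tool} asks that $\bar{Q}$ and $\bar{Q}-t_n$ be \emph{densely} eventually coincident, and by the paper's definition this means coincidence on a set dense in all of $\R$; the proof of that theorem really uses this, since it needs each $U_w$ open and dense in $\R$ to run the group argument (e.g.\ to intersect $U_w\cap U_{w'}$ and to prove that $U_v\ne\emptyset$ forces $U_v$ dense). What your transfer argument produces, after periodization, is eventual coincidence on a set dense only in $\bigcup_{k\in\Z}\bigl((a,b-t_n)+k\ell\bigr)$ (say $t_n>0$, $spt(\tau)=[a,b]$, $\ell=b-a$). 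This misses the seam intervals $[b-t_n,b]+\ell\Z$, each of positive length $|t_n|$: at a point of such an interval the tile of $\bar{Q}$ is the tail of one copy of $\tau$ while the tile of $\bar{Q}-t_n$ is the head of the next copy, so the relevant local pattern of $\bar{Q}$ there is the two-letter word $aa$ ($a$ the type of $\tau$), about which the hypothesis on $\tau$ and $\tau-t_n$ says nothing --- indeed $aa$ need not even belong to the language of $\phi$. So this is not the endpoint bookkeeping you flagged as the delicate point; it is a gap of full length $|t_n|$ in every period, and the hypothesis of Theorem \ref{main tool} is simply not verified for $Q=\{\tau\}$.

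The paper closes exactly this seam with an extra combinatorial device, and your argument is fixable the same way. Pick $b\in\mathcal{A}$ and a word $u$ with $w=bub$ in the language of $\phi$, and $m\in\N$ large enough that $w$ occurs in $\phi^m(a)$; let $Q$ be the sub patch of $\Phi^m(\tau)$ following the pattern $bu$, say with $spt(Q)=[c,c+L]$, and take $W=\{\lambda^m t_n:n\ge N\}$ with $N$ chosen so that $\lambda^m|t_n|<\omega_b$ for all $n\ge N$. The occurrence of $bub$ guarantees that the tile of $\Phi^m(\tau)$ immediately following $Q$ has type $b$, so the periodic tiling $\bar{Q}$ agrees with a sub patch of $\Phi^m(\tau)$ not just on one period but on $[c,c+L+\omega_b]$. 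Your transfer argument (which is correct for interior points, and correctly reads ``coincident'' as ``eventually coincident'') then gives eventual coincidence of $\bar{Q}$ and $\bar{Q}-\lambda^m t_n$ on a dense subset of an interval of length $L+\omega_b-\lambda^m|t_n|>L$, whose $L\Z$-translates cover $\R$ up to a set with dense complement in no interval --- i.e.\ the coincidence set is now dense in all of $\R$, and Theorem \ref{main tool} applies. Your verification of the subgroup condition (nonzero elements of $\R/L\Z$ accumulating at the identity generate a dense subgroup) is correct and is a detail the paper leaves implicit.
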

\begin{proof} Let $a$ be the type of $\tau$ and let $w$ be a word in the language of $\phi$ of the form $w=bub$, $b\in\mathcal{A}$. There is $m\in\N$ so that $w$ occurs in $\phi^m(a)$ and then there is a sub patch $Q$ of $\Phi^m(\tau)$ that follows the pattern of the word $bu$. Let $N\in\N$ be such that $\lambda^m|t_n|<\omega_b$ for all $n\ge N$. The tilings $\bar{Q}$ and $\bar{Q}-\lambda^mt_n$ are densely eventually coincident for $n\ge N$. Theorem \ref{main tool} applies with $W=\{\lambda^m t_n:n\ge N\}$.
\end{proof}

The following is Lemma 10 of \cite{B} with slightly weaker hypotheses.

\begin{lemma} \label{finite} Suppose that $\phi$ is Pisot, primitive,  and constant on final letters and suppose that $(\Omega_{\phi},\R)$ does not have pure discrete spectrum. There is then $B<\infty$ so that if $(x_n,i_n)R(x_{n+1},i_{n+1})$ for $n=1,\ldots N-1$, then
$\sharp\{(x_n,i_n):n=1,\ldots,N\}\le B$.
\end{lemma}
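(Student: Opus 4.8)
The plan is to argue by contradiction, producing pure discrete spectrum (against the standing hypothesis) out of the existence of arbitrarily long $R$-chains, and to do so by feeding Lemma \ref{epsilon} into the mechanism of Corollary \ref{corollary} and Theorem \ref{main tool}. So suppose no such bound $B$ exists. Then for each $n$ there is a chain $(x_1,i_1)R\cdots R(x_M,i_M)$ with more than $n$ distinct elements. Since $\mathcal{A}$ is finite and each $x_m$ lies in the compact interval $[0,\omega_{i_m}]$, a pigeonhole argument produces, inside such a chain, two distinct elements $(x_a,i_a)$ and $(x_b,i_b)$ with $a<b$, of a common type, and with $0<|x_a-x_b|$ as small as we please; the sub-chain from $a$ to $b$ connects them. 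Carrying this out as $n\to\infty$ and passing to a subsequence on which the common type is a fixed letter, I obtain a letter $c\in\mathcal{A}$, a sequence $t_k\to 0$ with $t_k\ne 0$, and connecting sub-chains from $(x^{(k)},c)$ to $(x^{(k)}+t_k,c)$.

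Next I would apply Lemma \ref{epsilon} (whose hypothesis ``constant on final letters'' is exactly the standing one) to each connecting sub-chain. Writing $\tau_k:=\rho_c-x^{(k)}$, the two endpoint patches are $P=\{\rho_1-\omega_1-x^{(k)},\tau_k\}$ and $P-t_k$, and the lemma gives dense eventual coincidence of $P$ and $P-t_k$ on $[-\epsilon,0]$. This interval is split by the seam $-x^{(k)}$ into a type-$1$ part and a type-$c$ part, one of which has length at least $\epsilon/2$. Since eventual coincidence at a point interior to a single tile is governed by that tile alone (for $s$ interior to one tile of the patch, the tile of $\Phi^N(P-s)$ at the origin lies in the inflation of that single tile), dense coincidence of $P$ and $P-t_k$ on the longer part descends, off an $|t_k|$-strip around the seam, to dense eventual coincidence of a single prototile with its $t_k$-translate on a sub-interval of its support of length at least $\epsilon/2-|t_k|\ge\epsilon/3$. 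Passing to a further subsequence so that this dominant prototile is a fixed $\rho_{c'}$, I obtain $\rho_{c'}$ and $\rho_{c'}-t_k$ eventually coincident on a dense subset of an interval $J_k\subset spt(\rho_{c'})$ with $|J_k|\ge\epsilon/3$.

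I would then run the argument of Corollary \ref{corollary} and Theorem \ref{main tool}. Fix a word $bub$ in the language (with $b\in\mathcal{A}$) of minimal length and let $Q$ follow the pattern $bu$. By uniform recurrence of the primitive substitution, occurrences of $bub$ have bounded gaps, so for a single large $m$ chosen uniformly in $k$ (this is where $|J_k|\ge\epsilon/3$ is used) every window of $\Phi^m(\rho_{c'})$ of length $\lambda^m\epsilon/3$ contains an occurrence of $bub$. Using that eventual coincidence of $\rho_{c'}$ and $\rho_{c'}-t_k$ at $y$ is the same as eventual coincidence of $\Phi^m(\rho_{c'})$ and $\Phi^m(\rho_{c'})-\lambda^m t_k$ at $\lambda^m y$, and placing $Q$ at an occurrence of $bub$ inside $\lambda^m J_k$, the dense coincidence on $\lambda^m J_k$ covers a full period of $\bar Q$; hence $\bar Q$ and $\bar Q-\lambda^m t_k$ are densely eventually coincident for all large $k$ (those with $\lambda^m|t_k|<\omega_b$). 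The numbers $\lambda^m t_k\to 0$ are nonzero, so the subgroup they generate in $\R/|spt(Q)|\Z$ has arbitrarily small elements and is therefore dense. Theorem \ref{main tool} then applies with this $Q$ and $W=\{\lambda^m t_k\}$ to give pure discrete spectrum, contradicting the hypothesis; so the bound $B$ must exist.

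The step I expect to be the main obstacle is the transition carried out in the middle paragraph: Lemma \ref{epsilon} delivers coincidence only on the short interval $[-\epsilon,0]$, that is, on a sub-interval $J_k$ of a single prototile, whereas the clean statement of Corollary \ref{corollary} presupposes density on the whole overlap. Bridging this gap — first reducing the two-tile patch coincidence to a single prototile, then spreading a short coincidence interval across a full period of the periodic tiling $\bar Q$ by inflating and using uniform recurrence to seat $Q$ inside $\lambda^m J_k$ — is the crux. It is also the point at which one must ensure the inflation level $m$ can be chosen uniformly in $k$, which is precisely why the dominant-tile argument securing the uniform lower bound $|J_k|\ge\epsilon/3$ is needed rather than merely a positive but $k$-dependent length.
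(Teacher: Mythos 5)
Your proof is correct, and it reaches the contradiction by the same overall strategy as the paper --- unbounded $R$-chains are fed through Lemma \ref{epsilon} to produce dense eventual coincidence between a patch and arbitrarily small nonzero translates of itself, and Theorem \ref{main tool} then yields pure discrete spectrum --- but the middle step is implemented genuinely differently. The paper anchors at one chain element and compares it with many elements of a second (possibly different) type, obtaining a fixed two-tile patch $P$ densely eventually coincident with patches $P_n$ at distinct translates $y_n\to y$ (not to $0$) over intervals $I_n$ that drift with $n$; it then takes a Hausdorff limit $I_{n_k}\to I$, passes to middle thirds, and uses differences $y_{n_k}-y_{n_{k'}}$ to manufacture small nonzero translates $x_m\to 0$ with coincidence on one \emph{fixed} interval $J'$, after which a single inflation places a tile $\tau\in\Phi^s(P)$ with support inside $\lambda^s J'$ and Corollary \ref{corollary} applies verbatim. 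You instead pigeonhole within a chain for two elements of the \emph{same} type at distance $t_k\to 0$, so Lemma \ref{epsilon} hands you a self-translate coincidence of $P^{(k)}$ with $P^{(k)}-t_k$ immediately, with no difference trick; the price is that your coincidence window $J_k$ moves with $k$, which you repair with the dominant-tile reduction, the uniform bound $|J_k|\ge\epsilon/3$, and a repetitivity argument seating an occurrence of $bub$ inside $\lambda^m J_k$ at a level $m$ uniform in $k$ --- in effect re-proving Corollary \ref{corollary} under the weaker hypothesis of density on a subinterval of definite length rather than on the whole overlap of supports. Both routes are sound (your seam-splitting and the claim that coincidence at a point interior to a tile is governed by that tile alone are fine, and the subgroup generated by the nonzero $\lambda^m t_k\to 0$ is indeed dense mod $|spt(Q)|$). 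One simplification worth noting: having secured self-translates $t_k\to 0$, you could finish exactly as the paper does --- write the coincidence in anchored form as $P\sim P-t_k$ on $I_k=[x^{(k)}-\epsilon,x^{(k)}]$ with $P$ fixed, pass to a subsequence with $I_k\to I$ in the Hausdorff topology, restrict to the middle third $J$ of $I$ (so the coincidence holds densely on $J$ for all large $k$), and inflate once to find a tile $\tau$ with support in $\lambda^s J$ --- which lands you in Corollary \ref{corollary} directly and dispenses with the uniform-recurrence machinery altogether.
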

\begin{proof} Let $\epsilon>0$ be as in Lemma \ref{epsilon}. If there is no such $B$ then, using Lemma \ref{epsilon}, there are $i,j\in\mathcal{A}$ and $y_n\to y\in\R$, with $y_n\ne y_m$ for $n\ne m$, so that the patches $P=\{\rho_1-\omega_1,\rho_i\}$ and $P_n=\{\rho_1-\omega_1-y_n, \rho_j-y_n\}$ are eventually coincident at an open dense set $U_n$ of points in an interval $I_n\subset spt(P)\cap spt( P_n)$ of length $\epsilon$. Let's pick a subsequence $\{n_k\}$ with $I_{n_k}\to I$ in the Hausdorff topology. Let $J$ be the middle third of $I$, let $J'$ be the middle third of $J$ and let $K$ be large enough so that $|y_{n_{k}}-y_{n_{k'}}|<\epsilon/9$ and $J\subset I_{n_k}$ for all $k,k'\ge K$. Then, with $x_m:=y_{n_{(m-1)K}}-y_{n_{mK}}$ for $m\in\N$,  $P$ and $P-x_m$ are eventually coincident on the dense subset $U':=J'\cap(\cap_{k,k'\ge K}(U_{n_k}-(y_{n_{k'}}-y_{n_k})))$ of $J'$. For large enough $s\in\N$ there is a tile $\tau\in\Phi^s(P)$ with support contained in $\lambda^s J'$. We have that $\tau$ and $\tau-\lambda^s x_m$ are eventually coincident at a dense set of points in the intersection of their supports for all $m\in\N$. Then $(\Omega_{\phi},\R)$ has pure discrete spectrum by Corollary \ref{corollary}.
\end{proof}

Define $\bar{R}$ on  $\cup_{i=1}^d\rho_i$ to be the transitive closure of $R$:  $(x,i)\bar{R}(y,j)$ if and only if there are 
$(x_n,i_n),\, n=1,\ldots, N$ with $(x_n,i_n)R(x_{n+1},i_{n+1})$ for $n=1,\ldots N-1$ and $(x,i)=(x_1,i_1),(y,j)=(x_N,i_N)$.

\begin{lemma} \label{closed} Suppose that $\phi$ is Pisot, primitive,  and constant on final letters and suppose that $(\Omega_{\phi},\R)$ does not have pure discrete spectrum. Then $\bar{R}$ is a closed equivalence relation.
\end{lemma}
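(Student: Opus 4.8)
The plan is to treat the two assertions---that $\bar{R}$ is an equivalence relation and that it is closed---separately, with essentially all the work going into closedness. For the algebraic part, I would first observe that $R$ is reflexive (given $(x,i)$, primitivity furnishes a $T\in\Omega_{\phi}$ containing a type-$i$ tile, and $T\approx_s T$ since $\approx_s$ is reflexive) and symmetric (interchange the roles of $(T,t)$ and $(T',t')$ and use symmetry of $\approx_s$). The transitive closure of a reflexive, symmetric relation is automatically an equivalence relation: $\bar{R}$ is reflexive since $R$ is, symmetric because a realizing $R$-chain may be reversed (legitimate as $R$ is symmetric), and transitive by concatenation of chains.

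The heart of the matter is closedness, and the key structural point is that the transitive closure of a closed relation need not itself be closed---what rescues the argument is the uniform bound $B$ from Lemma \ref{finite}. I would first isolate the claim that $R$ itself is closed. Suppose $(x_m,i)R(y_m,j)$ with $x_m\to x$ and $y_m\to y$, witnessed by $T_m\approx_s T_m'$ and $t_m,t_m'$ with $t_m+\rho_i\in T_m$, $t_m'+\rho_j\in T_m'$, and $t_m+x_m=t_m'+y_m$. Since $R$ is unaffected by translating $(T_m,T_m')$ simultaneously (and $\approx_s$ is translation invariant), I normalize so that $t_m=0$; then $\rho_i\in T_m$ and the second anchor becomes $x_m-y_m\to x-y$. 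Passing to a subsequence, compactness of $\Omega_{\phi}$ gives $T_m\to T$ and $T_m'\to T'$, and because $\approx_s$ is closed (Theorem \ref{quotient of Omega}) we get $T\approx_s T'$. It remains to check that the anchored tiles survive in the limit: the left endpoints of the type-$i$ tiles of $T$ (resp. type-$j$ tiles of $T'$) form a discrete, closed set, so the small translates implicit in local convergence---and the moving anchor $x_m-y_m$---must eventually land exactly on a tile position, yielding $\rho_i\in T$ and $(x-y)+\rho_j\in T'$. Taking $t=0$, $t'=x-y$ then gives $(x,i)R(y,j)$.

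With $R$ closed, I would establish closedness of $\bar{R}$ as follows. Given $(x^{(m)},i)\bar{R}(y^{(m)},j)$ with $x^{(m)}\to x$ and $y^{(m)}\to y$, choose for each $m$ a realizing $R$-chain and excise any repeated entry (if $(x_a,i_a)=(x_b,i_b)$ with $a<b$, deleting the intermediate block leaves a valid $R$-chain with the same endpoints), so that the chain has pairwise distinct entries; by Lemma \ref{finite} its length is then at most $B$. As the alphabet and the lengths are finite, a subsequence has constant length $N\le B$ and constant type sequence $k_1,\ldots,k_N$, with first and last entries $x^{(m)}$ and $y^{(m)}$. Each intermediate coordinate lies in the compact interval $[0,\omega_{k_l}]$, so after a further subsequence every coordinate converges, say $(z_l^{(m)},k_l)\to(z_l,k_l)$ with $z_1=x$ and $z_N=y$. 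Applying the closedness of $R$ to each consecutive pair gives $(z_l,k_l)R(z_{l+1},k_{l+1})$, whence $(x,i)\bar{R}(y,j)$.

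I expect the main obstacle to be precisely the interaction between closedness and transitivity: a priori the chains realizing $\bar{R}$ could grow unboundedly, and then no subsequence argument would close the relation. The essential ingredient is therefore Lemma \ref{finite}, whose uniform bound on the number of distinct elements in any $R$-chain is exactly what permits passage to convergent chains of fixed combinatorial type. A secondary, more technical point is the persistence of anchored tiles under the local topology---especially controlling the moving anchor $x_m-y_m$ and the boundary case where $x$ or $y$ is an endpoint of a prototile---but this is dispatched by the discreteness of tile positions noted above.
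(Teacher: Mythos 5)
Your proposal is correct and follows essentially the same route as the paper, whose entire proof is a two-line compression of exactly your argument: $\bar{R}$ is clearly an equivalence relation, $R$ is closed because $\approx_s$ is closed, and Lemma \ref{finite} then yields closedness of $\bar{R}$. Your expansion---normalizing $t_m=0$, using discreteness of tile positions to pass anchored tiles to the limit, excising repeated chain entries so Lemma \ref{finite} bounds chain length, and extracting subsequences of constant combinatorial type---is precisely the detail the paper leaves implicit.
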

\begin{proof} Clearly $\bar{R}$ is an equivalence relation. That $R$ is closed follows from $\approx_s$ being closed; by Lemma \ref{finite}, $\bar{R}$ is also closed.
\end{proof}

Under the hypotheses of the two previous lemmas, let $E_i:=\{(0,i),(\omega_i,i)\}$, $E:=\cup_{i=1}^dE_i$, $[E]:=\{(y,j):(y,j)\bar{R}(x,i) \text{ for some } (x,i)\in E_i,i\in\{1,\ldots,d\}\}$, and $[E]_i:=[E]\cap\rho_i$. The first coordinates of elements of $[E]_i$ partition the support of $\rho_i$ into finitely many subintervals $I_i^1\le I_i^2\le\cdots \le I_i^{m(i)}$.

\begin{lemma} \label{gluing} Suppose that $\phi$ is Pisot, primitive, and constant on final letters and suppose that $(\Omega_{\phi},\R)$ does not have pure discrete spectrum. If $x\in \mathring{I}_i^j$, $y\in \mathring{I}_k^n$, and $(x,i)\bar{R}(y,k)$, then
$I_i^j-x=I_k^n-y$ and $(x',i)\bar{R}(y',k)$ for all $x'\in\mathring{I}_i^j$ and $y'=x'-x+y\in \mathring{I}_k^n$.
\end{lemma}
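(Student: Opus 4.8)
The plan is to exploit a \emph{sliding} property of $R$ and propagate it along $\bar{R}$-chains, using the breakpoint-free subintervals $\mathring{I}_i^j$ to control where the sliding can stop. First I would record the sliding property: if $(a,i)\,R\,(b,j)$, then $(a+s,i)\,R\,(b+s,j)$ for every $s$ with $a+s\in[0,\omega_i]$ and $b+s\in[0,\omega_j]$. This is immediate from the definition of $R$: if $T\approx_s T'$, $t+\rho_i\in T$, $t'+\rho_j\in T'$ and $t+a=t'+b$, then the \emph{same} witnesses $T,T',t,t'$ serve for $(a+s,i)\,R\,(b+s,j)$, since $t+(a+s)=t'+(b+s)$ still lies in both $spt(t+\rho_i)$ and $spt(t'+\rho_j)$ exactly when $a+s\in[0,\omega_i]$ and $b+s\in[0,\omega_j]$. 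This holds up to and including the closed endpoints.

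Next I would apply this step by step to a chain witnessing $(x,i)\bar{R}(y,k)$, say $(x,i)=(x_1,i_1)\,R\,\cdots\,R\,(x_N,i_N)=(y,k)$. Let $S:=\{s:\ x_m+s\in[0,\omega_{i_m}]\ \text{for all }m\}=[s^-,s^+]$, a closed interval containing $0$; for each $s\in S$ every step slides, so $(x+s,i)\bar{R}(y+s,k)$. Since $x\in\mathring{I}_i^j\subseteq(0,\omega_i)$ is not a breakpoint, no coordinate $x_m$ can equal $0$ or $\omega_{i_m}$ (otherwise $(x,i)\bar{R}(0,i_m)\in E$ or $(x,i)\bar{R}(\omega_{i_m},i_m)\in E$ would put $x$ in $[E]_i$), forcing $s^-<0<s^+$. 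At $s=s^+$ some coordinate has $x_{m_0}+s^+=\omega_{i_{m_0}}$, so $(x+s^+,i)\bar{R}(\omega_{i_{m_0}},i_{m_0})\in E$ and $x+s^+\in[E]_i$ is a breakpoint; likewise $x+s^-\in[E]_i$. Writing $I_i^j=[p,q]$ with $p<x<q$, these breakpoints lie outside the breakpoint-free set $\mathring{I}_i^j=(p,q)$ on either side of $x$, so $s^-\le p-x$ and $s^+\ge q-x$; hence $S\supseteq I_i^j-x$ and the whole chain slides for every $s\in(p-x,q-x)$.

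I would then transport this to the $k$-side. For $s\in(p-x,q-x)$ we have $(x+s,i)\bar{R}(y+s,k)$ with $y+s\in[0,\omega_k]$ (since $s\in S$ and $i_N=k$). If $y+s$ were a breakpoint of $\rho_k$, transitivity of $\bar{R}$ would make $x+s$ a breakpoint of $\rho_i$, impossible as $x+s\in\mathring{I}_i^j$; so $y+s$ avoids every breakpoint and, varying continuously from $y\in\mathring{I}_k^n$, remains in $\mathring{I}_k^n$. Thus for each $x'\in\mathring{I}_i^j$, taking $s=x'-x$ gives $(x',i)\bar{R}(x'-x+y,k)$ with $x'-x+y\in\mathring{I}_k^n$, and in particular $I_i^j-x\subseteq I_k^n-y$. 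Running the identical argument with the roles of $(x,i)$ and $(y,k)$ exchanged (legitimate since $\bar{R}$ is a symmetric equivalence relation by Lemma \ref{closed}) yields $I_k^n-y\subseteq I_i^j-x$, whence $I_i^j-x=I_k^n-y$ and the claimed matching of interior points.

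I expect the main obstacle to be the second step: showing that the chain-sliding interval $S$ stops \emph{exactly} at breakpoints and reaches at least to the endpoints of $I_i^j$. This needs the sliding to hold on closed ranges (so that $s^\pm\in S$), the observation that hitting any prototile endpoint somewhere along the chain forces a breakpoint of $\rho_i$ through $\bar{R}$, and the defining property that the $I_i^j$ are precisely the breakpoint-free subintervals. The finiteness from Lemma \ref{finite} (via Lemma \ref{closed}) is what makes $[E]_i$ a finite breakpoint set, so that the partition into subintervals—and hence this whole argument—is meaningful.
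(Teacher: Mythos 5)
Your proposal is correct and takes essentially the same route as the paper's proof: the one-step sliding of $R$ (reusing the same witnesses $T,T',t,t'$), propagation along a finite chain, and the observation that the slide can only terminate at points of $[E]$, which cannot lie in $\mathring{I}_i^j$. The paper merely packages this bookkeeping differently, via the class-wide bounds $t^{min}_{(x,i)}$ and $t^{max}_{(x,i)}$ taken over subinterval endpoints rather than your chainwise sliding interval $S$ cut off at prototile endpoints, but the underlying mechanism is identical.
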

\begin{proof} Let $[(x,i)]$ denote the $\bar{R}$ equivalence class of $(x,i)$ and suppose that $(x,i)\notin [E]_i$.
For $(y,k)\in [(x,i)]$, $y\in I_k^n$ for some unique $n\in\{1,\ldots,m(k)\}$: set $min(y,k):=min(I_k^n)$ and $max(y,k):=max(I_k^n)$.
Let $t^{min}_{(x,i)}:=max_{(y,k)\in[(x,i)]}(x-y+min(y,k))$ and $t^{max}_{(x,i)}:=min_{(y,k)\in[(x,i)]}(x-y+max(y,k))$. Then for $(y,k)\in[(x,i)]$ we have  $t^{min}_{(y,k)}=t^{min}_{(x,i)}+(y-x)$ and $t^{max}_{(y,k)}=t^{max}_{(x,i)}+(y-x)$.
It follows that for any $t$ with $t^{min}_{(x,i)}<x-t<t^{max}_{(x,i)}$ and any $(y,k)\in[(x,i)]$ we have $(x-t,i)\bar{R}(y-t,k)$ and  $t^{min}_{(y,k)}<y-t<t^{max}_{(y,k)}$. (This is clearly true if $(x,i)R(y,k)$; then apply this in finitely many steps.) If $x\in \mathring{I}_i^j$, $y\in \mathring{I}_k^n$ and $(x,i)R(y,j)$, then $I_i^j=[t^{min}_{(x,i)},t^{max}_{(x,i)}]$, $I_k^n=[t^{min}_{(y,j)},t^{max}_{(y,j)}]$, and we have: $I_i^j-x=I_k^n-y$ and $(x',i)\bar{R}(y',k)$ for all $x'\in\mathring{I}_i^j$ and $y'=x'-x+y\in \mathring{I}_k^n$.
\end{proof}

Let us say that intervals $I_i^j$ and $I_k^n$ as in the Lemma above are equivalent and let $\{J_1,\ldots,J_m\}$ be the distinct equivalence classes of the $I_i^j$. For each $k\in\{1,\ldots,m\}$, let $l_k$ be the common length of the $I_i^j\in J_k$.  We make new prototiles $\alpha_k:=([0,l_k],k)$ and a new substitution $\phi_s$ on 
$\{1,\ldots,m\}$ as follows. Let $X_{\phi}:=\cup_{i=1}^d\rho_i/E$ be the wedge of circles obtained by identifying the endpoints in the disjoint union of the old prototiles, let $f_{\phi}:X_{\phi}\to X_{\phi}$ be the map that locally stretches length by a factor of $\lambda$ and follows the pattern of $\phi$, and let $\pi:\cup_{i=1}^d\rho_i\to X_{\phi}$ be the quotient map. Note that if $\pi_1:\Omega_{\phi}\to X_{\phi}$ is given by $\pi_1(T)=\pi((x,i))$ provided $\rho_i-x\in T$ and $x\in[0,\omega_i]$, then $f_{\phi}\circ \pi_1=\pi_1\circ\Phi$. Since $\approx_s$ is invariant under $\Phi$, $\bar{R}$, pushed forward to $X_{\phi}$ (which we continue to call $\bar{R}$) is invariant under $f_{\phi}$. It follows from Lemma \ref{gluing} that $X_{\phi}/\bar{R}$ is also a wedge of circles, one for each $J_k$, and $f_{\phi}$ induces a continuous map $f_{\phi_s}$ on $X_{\phi_s}:=X_{\phi}/\bar{R}$ satisfying: If $\pi_s:X_{\phi}\to X_{\phi_s}$ is the quotient map, then $\pi_s\circ f_{\phi}=f_{\phi_s}\circ \pi_s$. Now let $\phi_s$ be the substitution on the alphabet $\{1,\ldots,m\}$ with the property that  $\phi_s(k)=k_1k_2\cdots k_r$ if $f_{\phi_s}$ maps $\pi_s(\alpha_k)$ first around $\pi_s(\alpha_{k_1})$, then around $\pi_s(\alpha_{k_2})$, ..., finally around $\pi_s(\alpha_{k_r})$. Let $\alpha:\{1,\ldots,d\}\to\{1,\ldots,m\}$ be the morphism $\alpha(i):= i_1\cdots i_{m(i)}$ with $i_j$ defined by $I_i^j\in J_{i_j}$; we have $\alpha\circ\phi=\phi_s\circ\alpha$.

\begin{lemma} \label{shape of phi sub s} Suppose that $\phi$ is primitive, Pisot, and constant on final letters and suppose that $(\Omega_{\phi},\R)$ does not have pure discrete spectrum. Then $\phi_s$ is primitive, Pisot, and constant on final letters and if $\phi$ is injective on initial letters, so is $\phi_s$.
\end{lemma}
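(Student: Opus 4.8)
The plan is to extract everything from the intertwining relation $\alpha\circ\phi=\phi_s\circ\alpha$ together with the geometric description of $\phi_s$ as the substitution read off the $\lambda$-expanding map $f_{\phi_s}$ on the wedge of circles $X_{\phi_s}$. Writing $A$ for the abelianization of $\phi$, $A_s$ for that of $\phi_s$, and $L$ for the (non-square, nonnegative) abelianization of the letter-to-word morphism $\alpha$, the relation abelianizes to $LA=A_sL$ and iterates to $LA^n=A_s^nL$. I will use these alongside the fact that $\alpha$ hits every new letter: by construction each $J_k$ is the class of some arc $I_i^j$, so every row of $L$ is nonzero.

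For primitivity I would argue geometrically rather than attempt to upgrade $A_s^nL>0$ to a strictly positive power of $A_s$ (which can fail at the purely matrix-theoretic level, so the geometry is genuinely needed). Since $\phi$ is primitive, the $\R$-action on $\Omega_\phi$ is minimal and hence uniformly recurrent: there is $R_0$ so that every patch of diameter at least $R_0$ contains a tile of every type. Fix a new letter $k$, say $k$ is the class of $I_i^j$. Then $f_{\phi_s}^n(\pi_s(I_i^j))=\pi_s(f_\phi^n(I_i^j))$, and $f_\phi^n(I_i^j)$ is a connected sub-path of $f_\phi^n(\rho_i)$ of length $\lambda^n l_k$, i.e. the $\pi_s$-image of a connected run of tiles of $\Phi^n(\rho_i)$ of that length. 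Choosing $n$ with $\lambda^n\min_k l_k\ge R_0$, this run contains a full tile $\rho_{i'}$ of every type $i'$, hence traverses every arc of the circle $\rho_{i'}$; applying $\pi_s$ shows that every new letter occurs in $\phi_s^n(k)$. Thus $A_s^n$ is strictly positive and $\phi_s$ is primitive.

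For the Pisot property I would compute the inflation of $\phi_s$ directly. Because $f_{\phi_s}$ is induced from the $\lambda$-expanding map $f_\phi$ through the isometric identifications provided by Lemma \ref{gluing}, $f_{\phi_s}$ expands lengths by exactly $\lambda$; reading off $\phi_s(k)=k_1\cdots k_r$ then gives $\lambda l_k=\sum_p (A_s)_{pk}\,l_p$, so the positive length vector $\ell=(l_1,\dots,l_m)$ satisfies $\ell A_s=\lambda\ell$. Since $A_s$ is primitive, its Perron--Frobenius eigenvalue is the only one admitting a positive eigenvector, so that eigenvalue---the inflation of $\phi_s$---equals $\lambda$, which is Pisot by hypothesis. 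Hence $\phi_s$ is Pisot.

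Finally, I would obtain the ``shape'' statements by controlling the first and last letters of $\phi_s(k)$. Evaluating $\alpha\circ\phi=\phi_s\circ\alpha$ at the two ends of $\alpha(i)$ gives the first letter of $\phi_s(\text{class of }I_i^1)$ as the class of $I_{(\phi(i))_1}^1$, and the last letter of $\phi_s(\text{class of }I_i^{m(i)})$ as the class of $I_l^{m(l)}=:k_*$, where $l$ is the common final letter of $\phi$; the first identity is injective when $\phi$ is injective on initial letters, and the second exhibits a common final letter $k_*$. The remaining, and main, obstacle is the interior new letters---those classes $J_k$ occurring only in interior positions of the words $\alpha(i)$, hence neither first- nor last-arc classes. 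For such $k$ the first (resp.\ last) letter of $\phi_s(k)$ is the class of the arc of $X_\phi$ immediately to the right (resp.\ left) of the $[E]$-point $\lambda p_{j-1}$ (resp.\ $\lambda p_j$), and this is not pinned down by the algebraic identity alone. I would control it via the rigidity of Lemma \ref{gluing}: the endpoints of $I_i^j$ are $\bar R$-equivalent to genuine prototile endpoints, $\bar R$ is $\Phi$-invariant, and $f_\phi$ is a local homeomorphism near these points, so one-sided arc-configurations are transported faithfully. Injectivity on initial letters of $\phi$ prevents distinct outgoing configurations from being merged at the wedge, while the universal coincidence of right-endpoint neighbourhoods from Lemma \ref{epsilon} forces every incoming configuration to be the single class $k_*$. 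Verifying that these transports are genuinely faithful---that $f_\phi$ \emph{reflects}, and not merely preserves, $\bar R$-equivalence of one-sided arc germs---is where the real work lies.
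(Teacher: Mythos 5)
Your arguments for primitivity and for the Pisot property are correct and in fact more explicit than the paper's: the paper disposes of primitivity in one line (primitivity of $\phi$, the relation $\alpha\circ\phi=\phi_s\circ\alpha$, and surjectivity of $\pi_s$), which when unwound is essentially your covering argument, and the eigenvector computation $\ell A_s=\lambda\ell$ with $\ell=(l_1,\ldots,l_m)>0$ is the standard way to see that the inflation of $\phi_s$ is again $\lambda$. The problem is in the ``shape'' statements, where you have correctly located the difficulty but not resolved it: your ``interior new letters'' are in fact vacuous, and the observation you are missing is exactly the paper's key step. Every arc $I_i^j$ has endpoints lying in $[E]_i$, so each endpoint is $\bar{R}$-equivalent to a prototile endpoint; moreover every right endpoint $(\omega_i,i)$ is $R$-related to the left endpoint $(0,j)$ of any successor tile type (witnessed by adjacent tiles in a single tiling $T\approx_s T$), so the class of each endpoint of $I_i^j$ contains both a point $(0,r)$ and a point $(\omega_t,t)$. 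Combined with the translation rigidity from (the proof of) Lemma \ref{gluing}, this gives $I_r^1\in J_k$ and $I_t^{m(t)}\in J_k$ for \emph{every} class $J_k$: every letter of the new alphabet occurs both as the first letter of some $\alpha(r)$ and as the last letter of some $\alpha(t)$ (this is the Remark following the lemma in the paper). Once you have this, constancy on final letters is immediate from comparing last letters in $\alpha\circ\phi(t)=\phi_s\circ\alpha(t)$ with $\alpha(t)=\cdots k$, for all $k$ simultaneously, and no transport of one-sided germs is needed.

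For injectivity on initial letters your direct claim is also gappy even in the pinned case: from the identity you only get that the first letter of $\phi_s(\mathrm{class}(I_i^1))$ is $\mathrm{class}(I_{(\phi(i))_1}^1)$, and injectivity of $i\mapsto(\phi(i))_1$ does \emph{not} imply injectivity on classes, since $\bar{R}$ may merge $I_a^1$ with $I_b^1$ for $a\ne b$ --- equality of classes does not recover equality of the underlying letters, which is precisely the ``reflection'' problem you flag as the remaining hard work. The paper sidesteps this entirely with a combinatorial trick: injectivity of the initial-letter map of $\phi$ on a finite alphabet makes it a permutation, so $\phi^n(c)=c\cdots$ for all $c\in\mathcal{A}$ and some $n$; then, for each new letter $k$, choosing $r$ with $\alpha(r)=k\cdots$ and comparing first letters in $\alpha\circ\phi^n=\phi_s^n\circ\alpha$ yields $\phi_s^n(k)=k\cdots$ for all $k$. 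Hence the $n$-th iterate of the initial-letter map of $\phi_s$ is the identity, so that map is a bijection and $\phi_s$ is injective on initial letters --- no faithfulness of $f_\phi$ on $\bar{R}$-germs is ever required. So: right skeleton for primitivity and Pisot, but the core of the lemma needs the ``every class contains a first arc and a last arc'' fact plus the iterate trick, and as written your proposal does not close either half of the shape statement.
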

\begin{proof}  If $J_k$ is an equivalence class of intervals, each $I_i^j\in J_k$ has the property that its endpoints $a<b$ satisfy $(a,i)$ and $(b,i)$ are each  $\bar{R}$-equivalent to an endpoint of some prototile: say
$(a,i)\bar{R}(0,r)$ and $(b,i)\bar{R}(\omega_t,t)$. Then $I_r^1, I_t^{m(t)}\in J_k$. Let $l\in\mathcal{A}$ be such that $\phi(c)=\cdots l$ for each $c\in\mathcal{A}$ and let $l'$ be such that $I_l^{m(l)}\in J_{l'}$. Then $$\phi_s\circ \alpha (t)=\phi_s(\cdots k)=\cdots\phi_s(k).$$ On the other hand, $$\alpha\circ \phi(t)=\alpha(\cdots l)=\cdots l'.$$ From $\phi_s\circ\alpha=\alpha\circ \phi$, we have $\phi_s(k)=\cdots l'$ for all $k$; that is, $\phi_s$ is constant on final letters. 

Primitivity of $\phi_s$ follows from primitivity of $\phi$, the relation $\alpha\circ\phi=\phi_s\circ\alpha$, and surjectivity of $\pi_s$.

If $\phi$ is injective on initial letters there is $n\in\N$ so that $\phi^n(c)=c\cdots$ for all $c\in\mathcal{A}$. Then  from $\alpha(k)=r\cdots$ we have $$\phi_s^n\circ\alpha(r)=\phi_s^n(k\cdots)=\phi_s^n(k)\cdots$$
and also $$\alpha\circ \phi^n(r)=\alpha(r\cdots)=k\cdots,$$
from which it follows that $\phi_s^n(k)=k\cdots$ for all $k$, and $\phi_s$ is injective on initial letters. 
\end{proof}

\begin{Remark} If $k$ is any letter in the alphabet for $\phi_s$, there are $r,t\in\mathcal{A}$ with $\alpha(r)=k\cdots$ and $\alpha(t)=\cdots k$ (these are the $r$ and $t$ of the first two sentences of the proof of Lemma \ref{shape of phi sub s}). In particular, the alphabet for $\phi_s$ is no larger than the alphabet for $\phi$. The construction above of $\phi_s$ depends on the finiteness of the equivalence classes of the relation $\bar{R}$, and to obtain this we have used the constancy of $\phi$ on final letters. It would be interesting to know if this condition can be dropped (or considerably weakened) in Lemma \ref{finite}. As long as $(\Omega_{\phi},\R)$ does not have pure discrete spectrum, $\Omega_{\phi}/\approx_s$ is (isomorphic to) a substitution tiling space for some substitution $\psi$ (one can see this by first collaring, or rewriting, $\phi$ so that some power of the collared, or rewritten, version is constant on final letters, and then applying the following theorem). What can one say about the possiblities for the shape of $\psi$ and the size of its alphabet?
\end{Remark}

\begin{Theorem} \label{structure} Suppose that $\phi$ is primitive, Pisot, and constant on final letters and suppose that $(\Omega_{\phi},\R)$ does not have pure discrete spectrum. Then the substitution $\phi_s$ is Pisot and the dynamical system $(\Omega_{\phi}/\approx_s,\R)$ is topologically isomorphic with $(\Omega_{\phi_s},\R)$ by an isomorphism that conjugates the substitution homeomorphisms. Furthermore, the relation $\approx_s$ is trivial on $\Omega_{\phi_s}$ and $(\Omega_{\phi_s},\R)$ does not have pure discrete spectrum.
\end{Theorem}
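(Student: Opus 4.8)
The plan is to realise the isomorphism explicitly through the rewriting morphism $\alpha$. Because $\alpha\circ\phi=\phi_s\circ\alpha$ and the new prototile lengths $l_k$ were chosen so that the pieces $I_i^1,\ldots,I_i^{m(i)}$ of $\rho_i$ (carrying the labels $\alpha(i)=i_1\cdots i_{m(i)}$) have lengths summing to $\omega_i$, the morphism $\alpha$ acts on tilings: let $g\colon\Omega_{\phi}\to\Omega_{\phi_s}$ send $T$ to the tiling obtained by cutting each tile $\rho_i-t\in T$ into the consecutive tiles $\alpha_{i_1},\ldots,\alpha_{i_{m(i)}}$ and relabelling. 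First I would check that $g$ is continuous, commutes with translation, and intertwines the substitution homeomorphisms, $g\circ\Phi=\Phi_s\circ g$ (this is $\alpha\circ\phi=\phi_s\circ\alpha$ read on tilings). Since $g$ carries the $\Phi$-periodic generating tiling of $\Omega_{\phi}$ to a $\Phi_s$-periodic tiling and the $\R$-action is minimal, $g$ is onto $\Omega_{\phi_s}$. That $\phi_s$ is Pisot is then immediate: the length vector $(l_1,\ldots,l_m)$ is a strictly positive left eigenvector of the abelianization of $\phi_s$ with eigenvalue $\lambda$ (as $f_{\phi_s}$ stretches lengths by $\lambda$), so by Perron--Frobenius the inflation of $\phi_s$ is $\lambda$, which is Pisot. (Non-periodicity of $\phi_s$ follows too, since $\Omega_{\phi_s}$ will be isomorphic to $\Omega_{\phi}/\approx_s$, which is not equicontinuous.)

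The heart of the argument is to identify the fibres of $g$ with the $\approx_s$-classes. One inclusion is immediate from the definition of $R$: if $T\approx_s T'$ then, for every $t$, the tiles of $T-t$ and $T'-t$ covering the origin sit at positions $(x,i)$ and $(y,j)$ with $(x,i)R(y,j)$ (take $S=T-t$, $S'=T'-t$, $s=-x$, $s'=-y$), so these positions have the same image in $X_{\phi_s}=X_{\phi}/\bar{R}$, whence $g(T)=g(T')$. For the reverse inclusion, suppose $g(T)=g(T')$, so for every $t$ the origin-positions of $T-t$ and $T'-t$ are $\bar{R}$-related. For a dense set of $t$ the origin lies in the interior of a tile of the common refinement, so by Lemma \ref{gluing} these positions lie in the interiors of equivalent intervals and are joined by a chain $(x_1,i_1)R\cdots R(x_N,i_N)$. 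Lemma \ref{epsilon} turns this chain into dense eventual coincidence of the standardized two-tile patches $P_1,P_N$ on $[-\epsilon,0]$, and constancy on final letters makes the left neighbour irrelevant, so $T-t$ and $T'-t$ are densely eventually coincident near the origin; letting $t$ vary gives that $T$ and $T'$ are densely eventually coincident. Finally a single point of eventual coincidence makes some $\Phi^n(T-t),\Phi^n(T'-t)$ share a tile, hence be non-disjoint, hence proximal by Lemma 5.12 of \cite{BK}, so $\pi_{max}(T)=\pi_{max}(T')$ and $T\sim_{srp}T'$. Thus $T\approx_s T'$. With the fibres identified, $g$ descends to a continuous bijection $\Omega_{\phi}/\approx_s\to\Omega_{\phi_s}$, which is a homeomorphism by compactness and an isomorphism of $\R$-actions conjugating $\Phi$ to $\Phi_s$.

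It then remains to see that $(\Omega_{\phi_s},\R)$ fails pure discreteness and that $\approx_s$ is trivial on it. For the first, $\approx_s$-equivalent tilings are proximal (by the last step above), so $\pi_{max}$ factors as $\pi_{max}=h\circ g$ for a factor map $h\colon\Omega_{\phi_s}\to\Omega_{max}$, which is the maximal equicontinuous factor map of $\Omega_{\phi_s}$; since $g$ is a.e.\ one-to-one (Theorem \ref{quotient of Omega}) and $\pi_{max}$ is a.e.\ $r$-to-one with $r\ge 2$, the map $h$ is a.e.\ $r$-to-one, so the coincidence rank of $\phi_s$ is $r\ge 2$ and $(\Omega_{\phi_s},\R)$ does not have pure discrete spectrum. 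For triviality, I would run the construction of $\bar{R}$ for $\phi_s$ and show it is the diagonal: the intervals $J_k$ were defined to be the atoms of $\bar{R}$ on $\cup_i\rho_i$, so cutting at all $\bar{R}$-breakpoints leaves no interior point of a new prototile $\bar{R}$-equivalent to a distinct point. Granting this idempotency, the forward inclusion applied to $\phi_s$ shows that $\bar{T}\approx_s\bar{T}'$ forces the origin-positions of $\bar{T}-t,\bar{T}'-t$ to coincide for all $t$, i.e.\ $\bar{T}=\bar{T}'$.

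The main obstacle I expect is the reverse fibre inclusion: converting the purely combinatorial statement that origin-positions are $\bar{R}$-related into genuine dense eventual coincidence of the tilings. This is where Lemmas \ref{epsilon} and \ref{gluing} do the real work, and where constancy on final letters is essential, since it lets a chain of local $R$-relations be propagated across tile boundaries without the left neighbour obstructing coincidence. A secondary technical point is the idempotency of the $\bar{R}$-construction needed for triviality; this should follow from the defining property of the $J_k$ as equivalence classes, but must be checked against the possibility that new $\approx_s$-relations on $\Omega_{\phi_s}$ create breakpoints interior to the new prototiles.
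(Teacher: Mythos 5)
Your overall architecture is the paper's: the map induced by $\alpha$ (your $g$, the paper's $\bar{\alpha}$), identification of its fibres with the $\approx_s$-classes via Lemmas \ref{epsilon} and \ref{gluing}, surjectivity from minimality, and the count of fibre cardinalities to rule out pure discreteness. But there is a genuine gap at the step you yourself identify as the heart: passing from $g(T)=g(T')$ to $T\sim_{srp}T'$. Your route --- a single point of eventual coincidence gives a shared tile of $\Phi^n(T-t)$ and $\Phi^n(T'-t)$, ``hence proximal by Lemma 5.12 of \cite{BK}, so $\pi_{max}(T)=\pi_{max}(T')$'' --- is circular: Lemma 5.12 of \cite{BK}, as it is used in this paper (in the proof of Theorem \ref{quotient of Omega}), concerns tilings already known to lie in the same $\pi_{max}$-fibre; it says that non-proximal tilings with equal maximal equicontinuous image are disjoint. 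You cannot run it backwards to deduce equality of $\pi_{max}$-images from non-disjointness, and indeed non-disjointness alone does not imply proximality: the set of tilings containing one fixed tile meets a positive-measure family of distinct $\pi_{max}$-fibres, and tilings in distinct fibres are never proximal. The paper closes this gap with a genuinely different mechanism: using that there are, up to translation, only finitely many pairs $\{\rho_i-x,\rho_j-x'\}$ with $(x,i)\bar{R}(x',j)$ (a consequence of Lemma \ref{epsilon} together with Corollary \ref{corollary}), it extracts $k_n\to\infty$ so that the \emph{same} tile pair, with \emph{constant} relative displacement, sits at the origin of $\Phi^{-k_n}(T)$ and $\Phi^{-k_n}(T')$; dense eventual coincidence then yields one tile $\eta\in\Phi^N(\rho_i-x_1)\cap\Phi^N(\rho_j-x'_1)$, and applying $\Phi^{k_n-N}$ produces shared patches $\Phi^{k_n-N}(\eta)$ (suitably translated) in $T\cap T'$ of unbounded diameter. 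It is these arbitrarily large common patches --- not a single shared tile --- that give strong proximality, hence $T\sim_{srp}T'$.

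The secondary issue you flag, the ``idempotency'' of the $\bar{R}$-construction, is also a real gap rather than a routine check, and the paper does not prove it in the combinatorial form you propose. A priori the relation $\approx_s$ on $\Omega_{\phi_s}$ could identify more than the subdivision record: the new prototiles' interiors could acquire fresh $\bar{R}$-breakpoints coming from $\approx_s$-pairs that exist downstairs but have no preimage pairs upstairs. The paper instead pulls any relation $\bar{\alpha}(T)\approx_s\bar{\alpha}(T')$ back through $\bar{\alpha}$: dense eventual coincidence transfers to $T,T'$ via the commuting diagram and Lemma \ref{epsilon} (with the intervals scaled by $\lambda^{-n(t)}$), and strong regional proximality transfers because, after identifying $\Omega_{max}^s$ with $\Omega_{max}$ by maximality and uniqueness of the equicontinuous factor, one has $\pi_{max}=\pi_{max}^s\circ\bar{\alpha}$, so $\bar{\alpha}(T)\sim_{srp}\bar{\alpha}(T')$ forces $T\sim_{srp}T'$. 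This yields $\bar{\alpha}(T)\approx_s\bar{\alpha}(T')\Rightarrow T\approx_s T'\Rightarrow\bar{\alpha}(T)=\bar{\alpha}(T')$, which is the triviality you want; note that it uses the already-established fibre identification (including the $\sim_{srp}$ step above), so your proposal would need both repairs, in that order, to go through. Your remaining components --- Pisot-ness of $\phi_s$ from the positive left length-eigenvector with eigenvalue $\lambda$, surjectivity by minimality, and the a.e.\ fibre count via Theorem \ref{quotient of Omega} --- are sound and match the paper.
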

\begin{proof} This is basically the 1-dimensional version of Theorem 12 of \cite{B}. We provide an argument here for the reader's convenience. 

The morphism $\alpha$ induces a map $\bar{\alpha}:\Omega_{\phi}\to\Omega_{\phi_s}$ that semi conjugates $\Phi$ with $\Phi_s$ and also the $\R$-actions on the two spaces (see, for example, \cite{BD2}). We will show that $\bar{\alpha}(T)=\bar{\alpha}(T')$ if and only if $T\approx_s T'$. 

Let $\pi_1:\Omega_{\phi}\to X_{\phi}$ be given by $\pi_1(T)=[(x,i)]$ if and only if $\rho_i-x\in T$ and, likewise, let $\pi_2:\Omega_{\phi_s}\to X_{\phi_s}$ be defined by $\pi_2(S)=[(x,k)]$ if and only if $\alpha_k-x\in S$. We have the following commuting diagram. 
\begin{equation}\label{diagram}
\begin{picture}(200, 50)(-50, -10)

\put(27, 0){$\pi_1$}
\put(90, 0){$\pi_2$}
\put(50, 27){$\bar{\alpha}$}

\put(2,20){$\circlearrowleft$}
\put(2,-25){$\circlearrowleft$}
\put(100,20){$\circlearrowleft$}
\put(100,-25){$\circlearrowleft$}

\put(-10,20){$\Phi$}
\put(112,20){$\Phi_s$}
\put(-10,-25){$f_{\phi}$}
\put(112,-25){$f_{\phi_s}$}

\put(50, -18){$\pi_s$}
\put(15, -25){$X_{\phi}$}
\put(80, -25){$X_{\phi_s}$}

\put(15, 20){$\Omega_{\phi}$}
\put(80, 20){$\Omega_{\phi_s}$}
\put(22,12){\vector(0,-1){22}}
\put(85,12){\vector(0,-1){22}}

\put(30,-23){\vector(1,0){43}}
\put(30,22){\vector(1,0){43}}

\end{picture}
\end{equation}

Now if $T\approx_s T'$ (in $\Omega_{\phi}$), then $T-t\approx_s T'-t$ for all $t\in\R$, so $\pi_1(T-t)\bar{R}\pi_1(T'-t)$, i.e., $\pi_s(\pi_1(T-t))=\pi_s(\pi_1(T'-t))$, and hence $\pi_2(\bar{\alpha}(T)-t)=\pi_2(\bar{\alpha}(T')-t)$, for all $t\in\R$. This means that $\bar{\alpha}(T)=\bar{\alpha}(T')$.

Conversely, suppose that $\bar{\alpha}(T)=\bar{\alpha}(T')$. Then $\bar{\alpha}(T-t)=\bar{\alpha}(T'-t)$ for all $t\in\R$ and from diagram \ref{diagram} we have $\pi_1(T-t)\bar{R}\pi_1(T'-t)$ for all $t\in\R$. The set $U$ of $t\in\R$ so that neither $\pi_1(T-t)$ nor $\pi_1(T'-t)$ equals the branch point $E$ of $X_{\phi}$ is open and dense in $\R$. Fix $t\in U$, let $[(x,i)]=\pi_1(T-t)$, and let $[(x',j)]=\pi_1(T'-t)$. Then $(x,i)\bar{R}(x',j)$ and, from Lemma \ref{epsilon}, there is $\epsilon>0$ so that the patches $\{\rho_1-\omega_1-x,\rho_i-x\}$ and $\rho_1-\omega_1-x',\rho_j-x'\}$ are densely eventually coincident at $s$ for a dense set of $s$ in $[-\epsilon,0]$. This means that $T-t$ and $T'-t$ are densely eventually coincident at a dense set of $s$ in $[-\epsilon',0]$ where $\epsilon'>0$ is such that $[-\epsilon,0]\subset spt(\rho_i-x)\cap spt(\rho_j-x')$. It follows that $T$ and $T'$ are densely eventually coincident. It remains to show that $T$ and $T'$ are strongly regionally proximal. 

From Lemma \ref{epsilon} and Corollary \ref{corollary} it follows that there are, up to translation, only finitely many pairs $\{\rho_i-x,\rho_j-x'\}$ with $(x,i)\bar{R}(x',j)$. We are assuming that  $\bar{\alpha}(T)=\bar{\alpha}(T')$. From diagram \ref{diagram},  $\bar{\alpha}(\Phi^{-k}(T))=\bar{\alpha}(\Phi^{-k}(T'))$ for all $k\in\N$. There are then $i,j\in\mathcal{A},x_n\ge0,x'_n\ge0$, and $k_n\to\infty$ so that $\rho_i-x_n\in B_0[\Phi^{-k_n}(T)]$, $\rho_j-x'_n\in B_0[\Phi^{-k_n}(T')]$, and $x_n-x'_n$ is constant. Since $\Phi^{-k_1}(T)$ and $\Phi^{-k_1}(T')$ are densely eventually coincident, there is $s\in spt(\rho_i-x_1)\cap spt(\rho_j-x'_1)$, $N\in\N$, and a tile $\eta$ so that $\eta\in \Phi^N(\rho_i-x_1)\cap\Phi^N(\rho_j-x'_1)$. Then, for $k_n\ge N$, $\Phi^{k_n-N}(\eta)+\lambda^{k_n-N}(x_N-x_1)\in T\cap T'$. Thus $T$ and $T'$ are strongly proximal, and hence strongly regionally proximal. We have shown that $\bar{\alpha}(T)=\bar{\alpha}(T')\Leftrightarrow T\approx_s T'$. Since the $\R$-action on $\Omega_{\phi_s}$ is minimal and $\bar{\alpha}(T-t)=\bar{\alpha}(T)-t$, $\bar{\alpha}$ is surjective, and hence induces an isomorphism of $(\Omega_{\phi}/\approx_s,\R)$ with $(\Omega_{\phi_s},\R)$.

To see that $\approx_s$ is trivial on $(\Omega_{\phi_s},\R)$, suppose that $\bar{\alpha}(T)\approx_s\bar{\alpha}(T')$. In particular, there is a dense set $U$ so that for each $t\in U$ there is $n(t)\in\N$ with 
$B_0[\Phi_s^{n(t)}(\bar{\alpha}(T)-t)]=B_0[\Phi_s^{n(t)}(\bar{\alpha}(T')-t)]$. Fix such a $t$. Then $\pi_2(\Phi_s^{n(t)}(\bar{\alpha}(T)-t))=\pi_2(\Phi_s^{n(t)}(\bar{\alpha}(T')-t))$, so $(\Phi^{n(t)}(T)-\lambda^{n(t)}t)\bar{R}(\Phi^{n(t)}(T')-\lambda^{n(t)}t)$, by diagram \ref{diagram}. Using Lemma \ref{epsilon} (as above), there is then $\epsilon'>0$ so that $\Phi^{n(t)}(T)-\lambda^{n(t)}t$ and $\Phi^{n(t)}(T')-\lambda^{n(t)}t$ are eventually coincident at a dense set of points in $[-\epsilon',0]$. Then $T-t$ and $T'-t$ are eventually coincident at a dense set of points in $[-\epsilon'\lambda^{-n(t)},0]$ and, since the set of such $t$ is dense, $T$ and $T'$ are densely eventually coincident.

We now argue that if $\bar{\alpha}(T)\approx_s\bar{\alpha}(T')$, then $T$ and $T'$ are strongly regionally proximal (denoted $T\sim_{srp}T'$). Let $\pi_{max}:\Omega_{\phi}\to\Omega_{max}$ and $\pi_{max}^s:\Omega_{\phi_s}\to\Omega^s_{max}$ denote the maximal equicontinuous factor maps for the systems $(\Omega_{\phi},\R)$ and $(\Omega_{\phi_s},\R)$, resp., and recall that tilings $T,T'$ in either space have the same image under the maximal equicontinuous factor map if and only if $T\sim_{srp}T'$. Since $\bar{\alpha}(T)=\bar{\alpha}(T')$ implies that $T$ and $T'$ are strongly regionally proximal, $\pi_{max}$ factors through $\Omega_{\phi_s}$. Hence, by maximality, and by uniqueness up to isomorphism of the maximal equicontinuous factor, $(\Omega_{max},\R)$ and $(\Omega^s_{max},\R)$ are isomorphic and we may take $\Omega^s_{max}=\Omega_{max}$ and $\pi_{max}=\pi_{max}^s\circ\bar{\alpha}$. Now $\bar{\alpha}(T)\approx_s\bar{\alpha}(T')\implies \bar{\alpha}(T)\sim_{srp}\bar{\alpha}(T')\implies \pi_{max}^s(\bar{\alpha}(T))=\pi_{max}^s(\bar{\alpha}(T'))\implies T\sim_{srp} T'$. Together with $\bar{\alpha}(T)\approx_s\bar{\alpha}(T')$ implies $T$ and $T'$ are densely eventually coincident, this shows that $\bar{\alpha}(T)\approx_s\bar{\alpha}(T')\implies T\approx_s T'\implies
\bar{\alpha}(T)=\bar{\alpha}(T')$; i.e., $\approx_s$ is trivial on $\Omega_{\phi_s}$.

 For each $z\in\Omega_{max}=\Omega^s_{max}$ there are $T,T'\in\pi_{max}^{-1}(z)$ with $T$ and $T'$ nowhere eventually coincident (\cite{BK}). Then $\bar{\alpha}(T)\ne\bar{\alpha}(T')$ but $\pi_{max}^s(\bar{\alpha}(T))=\pi_{max}^s(\bar{\alpha}(T'))$. Thus, $\pi_{max}^s$ is at least 2-1 everywhere and $(\Omega_{\phi_s},\R)$ does not have pure discrete spectrum.
\end{proof}

\begin{lemma} \label{dec prototiles} Suppose that $\phi$ is primitive, non-periodic, Pisot, injective on initial letters, and constant on final letters. Then there are $i\ne j\in\mathcal{A}$ so that the prototiles $\rho_i$ and $\rho_j$ are eventually coincident at a dense set of points in $spt(\rho_i)\cap spt(\rho_j)$.
\end{lemma}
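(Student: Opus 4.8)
The plan is to reduce the statement to a coincidence between the two one-sided fixed tilings attached to $i$ and $j$, and then to produce such a coincidence from the hypothesis that $\phi$ is constant on final letters. First I would pass to a convenient power of $\phi$. Since $\phi$ is injective on initial letters and $\mathcal{A}$ is finite, the first-letter map $a\mapsto a_1$ is a bijection, so some power fixes all initial letters; replacing $\phi$ by that power changes neither $\Omega_\phi$, the prototiles, nor the relation of eventual coincidence (which is cofinal in $n$ and stable under one more application of $\Phi$), and it preserves all the hypotheses and the statement to be proved. Thus I may assume $\phi(a)=a\cdots$ for every $a\in\mathcal{A}$. Then $\rho_a\subset\Phi(\rho_a)\subset\Phi^2(\rho_a)\subset\cdots$, and $R_a:=\bigcup_n\Phi^n(\rho_a)$ is a $\Phi$-fixed tiling of $[0,\infty)$ whose leftmost tile is $\rho_a$. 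Because $\Phi^n(\rho_a-s)=\Phi^n(\rho_a)-\lambda^n s$ and $\Phi^n(\rho_a)\subset R_a$, for $s\in spt(\rho_i)\cap spt(\rho_j)$ outside a countable set (the preimages of tile boundaries) the prototiles $\rho_i$ and $\rho_j$ are eventually coincident at $s$ exactly when, for some $n$, the tile of $R_i$ at the point $\lambda^n s$ equals the tile of $R_j$ at $\lambda^n s$. Writing $C_{ij}$ for the set of positions carrying the same tile in $R_i$ and in $R_j$, the Lemma amounts to finding $i\neq j$ with $\bigcup_{n\ge 0}\lambda^{-n}C_{ij}$ dense in $[0,\min(\omega_i,\omega_j)]$.

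The engine will be a propagation principle coming from $\Phi$-invariance of the rays: if $R_i$ and $R_j$ agree on an interval $[\ell,m]$ bounded by common tile boundaries, then applying $\Phi$ (which fixes both rays) shows they agree on $[\lambda\ell,\lambda m]$, hence on $\bigcup_{k\ge 0}[\lambda^k\ell,\lambda^k m]$. In particular a single coincidence window of multiplicative length at least $\lambda$ (that is, $m\ge\lambda\ell$) forces $R_i$ and $R_j$ to agree on the whole half-line $[\ell,\infty)$; then $C_{ij}\supseteq[\ell,\infty)$ and $\bigcup_{n\ge 0}\lambda^{-n}C_{ij}\supseteq(0,\infty)$, which is more than enough. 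So it suffices to exhibit, for some $i\neq j$, such a window — equivalently, to show that two distinct rays are right-asymptotic (agree on a half-line).

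This is where constant on final letters enters, and it is the step I expect to be the main obstacle. Let $l$ be the common final letter, so every $\phi(a)$ ends in $l$ and hence every level-$n$ supertile of every ray ends with the fixed patch $\Phi^{n-1}(\rho_l)$; thus any shared tile boundary of $R_i$ and $R_j$ immediately produces a shared $l$-tile to its left, and that shared block then propagates under $\Phi$. The difficulty is one of alignment: the forced $l$-blocks sit, a priori, at different positions in the two rays, and one must show that for some pair they can be made to overlap at a full $\lambda$-scale. I would handle this by the balanced-pair/coincidence mechanism that is the main idea of \cite{BD}: track the balanced pair formed by equal-length matching prefixes of $R_i$ and $R_j$; applying $\phi$ sends it to a pair both of whose members now end in $l$, so constant on final letters peels off an aligned coincidence at the right end at every step, while recognizability (Moss\'{e}) and the finiteness, up to translation, of the relevant pairs of patches force the unresolved portion to remain bounded for some choice of $i\neq j$. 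This yields a coincidence window of multiplicative length at least $\lambda$ between two distinct rays; feeding it into the propagation principle above gives coincidence of $\rho_i$ and $\rho_j$ at every $s\in(0,\min(\omega_i,\omega_j))$ off a countable set, hence at a dense set, as required. (That constant on final letters is genuinely needed is visible from the Thue--Morse substitution $a\mapsto ab,\ b\mapsto ba$, which is primitive, non-periodic, Pisot, and injective on initial letters but not constant on final letters: its two fixed rays are letterwise complementary and coincide nowhere.)
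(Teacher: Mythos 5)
Your normalization step and your propagation principle are both correct: passing to a power with $\phi(a)=a\cdots$ for all $a$ is legitimate, the rays $R_a=\bigcup_n\Phi^n(\rho_a)$ are well defined and $\Phi$-fixed, and an aligned coincidence window $[\ell,m]$ with $m\ge\lambda\ell$ does force $R_i$ and $R_j$ to agree on $[\ell,\infty)$, which would give the lemma (indeed more: eventual coincidence off a countable set, strictly stronger than the density the lemma asserts). The gap is exactly where you flag it, and it is fatal as written. Two concrete problems. First, to start your balanced-pair tracking you need prefixes of $R_i$ and $R_j$ of equal \emph{geometric} length with a shared tile boundary at their right ends; since tile lengths are coordinates of the Perron eigenvector, this requires prefixes with equal $\omega$-weighted abelianizations, and the existence of such an aligned pair for \emph{some} $i\ne j$ is nowhere argued. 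Second, and more seriously, the assertion that ``recognizability and finiteness, up to translation, of the relevant pairs of patches force the unresolved portion to remain bounded'' is precisely the hard content: for Pisot substitutions, termination of the balanced-pair process in coincidences is essentially equivalent to pure discrete spectrum itself (which is why \cite{BD} only settles two letters and why \cite{BSW} imposes extra structure). Invoking it here makes the argument circular --- you are assuming a statement of the same depth as Theorem \ref{main theorem}, which this lemma is supposed to feed. The constant-final-letter mechanism you describe (a same-length pair of words maps under $\phi$ to a pair both ending in $l$, producing one aligned coincidence per step at the right end) is the content of the paper's Lemma \ref{epsilon}, but it peels a single tile per step and gives no control whatever on the growth of the non-coincident middle; finiteness of strongly-regionally-proximal pairs up to translation (which does apply to the two-sided extensions $U_i,U_j$, as they share the left ray of $c$-tiles) bounds the set of \emph{patterns} occurring, not the \emph{length} of the unresolved stretch.

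The paper's proof goes a genuinely different route that avoids any balanced-pair termination claim, and it proves only the weaker dense statement: it works with the coincidence rank $r$, taking $r$ pairwise disjoint $\Phi$-fixed tilings $T_1,\ldots,T_r$ in a single fiber of $\pi_{max}$. Constancy on final letters forces the endpoint sets of distinct $T_q$ to be disjoint, so the union of all endpoints cuts $\R$ into intervals each carrying a configuration of $r$ distinct tiles; finiteness of configurations up to translation plus non-periodicity yields two positions where the configurations agree except in a single tile, producing the pair $i\ne j$. If $\rho_i$ and $\rho_j$ failed to be densely eventually coincident on some subinterval, a limit along a sequence $\Phi^{n_p}$ manufactures $r+1$ pairwise disjoint, pairwise strongly regionally proximal tilings, contradicting the almost-everywhere $r$-to-one property of $\pi_{max}$. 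Nothing in your proposal substitutes for this counting argument. To salvage your approach you would need an independent proof that some two distinct rays are right-asymptotic with exactly matching positions; that does hold in small examples (e.g.\ $1\mapsto 121$, $2\mapsto 21$, where $R_1=12w$ and $R_2=21w$ solve the same fixed-point equation $w=121\,\phi(w)$), and your Thue--Morse sanity check is apt, but I know of no soft argument deriving it from the hypotheses, and the lemma as stated does not require it.
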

\begin{proof} By replacing $\phi$ by $\phi^k$ for an appropriate $k\in\N$, we may assume that $\phi(a)=a\cdots $ for all $a\in\mathcal{A}$. Let $r$ be the coincidence rank of $\phi$. There are then $T_1,\ldots,T_r\in\Omega_{\phi}$ with the properties: $T_i\sim_{srp}T_j$ for all $i,j\in\{1,\ldots,r\}$; $\Phi^n(T_i)\cap\Phi^n(T_j)=\emptyset$ for all $i\ne j\in\{1,\ldots,r\}$ and all $n\in\N$; and the $T_i$ are $\Phi$-periodic (see \cite{BK}, or Theorem 4 of \cite{B}). Replacing $\phi$ by an appropriate power, we may assume that the $T_i$ are all fixed by $\Phi$. It follows from the constancy of $\phi$ on final letters that, for $i\ne j\in\{1,\ldots,r\}$, the set of endpoints of $T_i$ (that is, the set of endpoints of supports of tiles in $T_i$) is disjoint from the set of endpoints of $T_j$. Let $V=\{\cdots <v_{-1}<v_0<v_1<\cdots\}$ denote the union of the sets of endpoints of the $T_i$, $i=1,\ldots,r$. For each $k\in\Z$ and $i\in\{1,\ldots,r\}$ let $\tau_i^k$ denote the tile in $T_i$ that has $(v_k+v_{k+1})/2$ in its support. We will call $\mathcal{C}_k:=\{\tau_1^k,\ldots,\tau_r^k\}$ a {\em configuration}. Up to translation, there are only finitely many distinct configurations (see \cite{BK}), and since the $T_i$ are not (translation) periodic, the translation equivalence of $\mathcal{C}_k$ is not a periodic function of $k$. Therefore, there are $k,l\in\Z$ so that $\mathcal{C}_{k-1}-v_{v_k-1}=\mathcal{C}_{l-1}-v_{l-1}$ while $\mathcal{C}_{k}-v_k\ne\mathcal{C}_l-v_l$. For any $n\in\Z$, the configurations $\mathcal{C}_{n-1}$ and $\mathcal{C}_n$ differ only in the tile of $\mathcal{C}_{n-1}$ with terminal endpoint $v_n$ and the tile of $\mathcal{C}_n$ with initial endpoint $v_n$. Hence there are $i\ne j$ with $\rho_i\in \mathcal{C}_k-v_k$, $\rho_j\in\mathcal{C}_l-v_l$, and $(\mathcal{C}_k-v_k)\setminus\{\rho_i\}=(\mathcal{C}_l-v_l)\setminus\{\rho_j\}$.

Suppose (by way of contradiction) that $\rho_i$ and $\rho_j$ are not eventually coincident at a dense set of points in the intersection of their supports.  Note that because $\phi(i)=i\cdots$ and $\phi(j)=j\cdots$, eventual coincidence of $\rho_i$ and $\rho_j$ at a dense set of points in an interval $[0,\delta]$, with $\delta>0$, would imply eventual coincidence of those prototiles at a dense set of points in the intersection of their supports. Thus there are $0\le t_0<t_2\le min\{v_{k+1}-v_k,v_{l+1}-v_l\}$ so that $\rho_i$ and $\rho_j$ are not eventually coincident at any point of the interval $[t_0,t_2]$. Let $i',j'$ be such that $\rho_i\in T_{i'}-v_k$ and $\rho_j\in T_{j'}-v_l$ and let $t_1:=\frac{t_0+t_2}{2}$. By compactness, there are $n_p\to\infty$ so that $\Phi^{n_p}(T_q-v_k-t_1)\to S_q\in\Omega_{\phi}$, for all $q\in\{1,\ldots,r\}$, and $\Phi^{n_p}(T_{j'}-v_l-t_1)\to S_{r+1}\in\Omega_{\phi}$. We claim that the $S_q$, $q=1,\ldots,r+1$, are pairwise strongly regionally proximal. Indeed, strong regional proximality is closed and $\Phi$-invariant, so the $S_q$, $q=1,\ldots,r$, are strongly regionally proximal. Pick $q'\in\{1,\ldots,r\}\setminus\{j_1\}$. There is then $q''\in\{1,\ldots,r\}$ with $B_0[T_{q'}-v_l-t_1]=B_0[T_{q''}-v_k-t_1]$. Then $\Phi^{n_p}(T_{q'}-v_l-t_1)\to S_{q''}$, and again, since $\sim_{srp}$ is closed and $\Phi$-invariant, $S_{r+1}$ is strongly regionally proximal with $S_{q''}$. Hence the $S_q$, $q=1,\ldots,r+1$, are all strongly regionlly proximal, as claimed.  We further claim that the $S_q$, $q\in\{1,\ldots,r+1\}$, are pairwise disjoint. For $q\in\{1,\ldots,r\}$ this is clear: since the $T_q$ are fixed by $\Phi$, the set of configurations formed by the $\Phi^{n_p}(T_q-v_k-t_1)$ equals the set of configurations formed by the $T_q$ and hence the set of configurations formed by the $S_q$ is contained in the set of configurations formed by the $T_q$. In particular, each configuration formed by the $S_q$, $q\in\{1,\ldots,r\}$, consists of $r$ distinct tiles, so these $S_q$ are pairwise disjoint. The same argument shows that $S_{r+1}$ is disjoint from $S_q$ for $q\in\{1,\ldots,r\}\setminus\{i_1\}$, since $\{S_q:q\in\{1,\ldots,r+1\}\setminus\{i_1\}\}=\{\lim_{p\to\infty}\Phi^{n_p}(T_q-v_l-t_1):q\in\{1,\ldots,r\}\}$. It remains to show that $S_{r+1}\cap S_{i_1}=\emptyset$. For this, consider the tilings $U_i,U_j\in\Omega_{\phi}$ that are fixed by $\Phi$ with $\rho_c-\omega_c,\rho_i\in U_i$ and $\rho_c-\omega_c,\rho_j\in U_j$, where $c$ is such that $\phi(a)=\cdots c$ for all $a\in\mathcal{A}$. Then  $S_{i_1}=\lim_{p\to\infty}\Phi^{n_p}(U_i-t_1)$, and  $S_{r+1}=\lim_{p\to\infty}\Phi^{n_p}(U_j-t_1)$. As $U_i\sim_{srp}U_j$ (since $U_i$ and $U_j$ are proximal), there are, up to translation, only finitely many pairs $\{B_0[U_i-t], B_0[U_j-t]\}$, $t\in\R$.
It follows that if there are $t_p$ with $B_0[\lim_{p\to\infty}( U_i-t_p)]=B_0[\lim_{p\to\infty}(U_j-t_p)]$, then 
$B_0[U_i-t_p]=B_0[U_j-t_p]$ for all large $p$. Thus, if $S_{i_1}\cap S_{r+1}\ne\emptyset$, say $B_0[S_{i_1}-t]=B_0[S_{r+1}-t]$, and $t_p=t_1\lambda^{n_p}-t$, we have  $B_0[U_i-t_p]=B_0[\Phi^{n_p}(U_i-t_1)-t]=B_0[\Phi^{n_p}(U_j-t_1)-t]=B_0[U_j-t_p]$ for all large $p$. For $p$ large enough that $|t/\lambda^{n_p}|<(t_0+t_2)/2$, we have $s:=t_1-(t/\lambda^{n_p})\in (t_0,t_2)$ with $\rho_i$ and $\rho_j$ eventually coincident at $s$, a contradiction. This proves that $S_{i_1}$ and $S_{r+1}$ are disjoint and hence the claim that the $S_q$, $q\in\{1,\ldots,r+1\}$, are pairwise disjoint.

We now have the situation that there are $r+1$ tilings $S_1,\ldots,S_{r+1}$ that are strongly regionally proximal and pairwise disjoint. Suppose that $T'=T'_1\in\Omega_{\phi}$. By minimality of the $\R$-action, there are $s_n$ such that $S_1-s_n\to T'_1$, by passing to a subsequence, we may assume that $S_q-s_n\to T'_q\in\Omega_{\phi}$ for $q=1,\ldots,r+1$. Then the $T'_q$ are strongly regionally proximal and pairwise disjoint (the latter uses again that, up to translation, there are only finitely many pairs $\{B_0[S_q-t], B_0[S_{q'}-t]\}$). But then the coincidence rank of $\phi$ is at least $r+1$, not $r$. Thus $\rho_i$ and $\rho_j$ must be eventually coincident at a dense set of points in the intersection of their supports.
\end{proof}

\begin{corollary}\label{nontrivial}  Suppose that $\phi$ is primitive, non-periodic, Pisot, injective on initial letters, and constant on final letters. Then the relation $\approx_s$ is nontrivial on $\Omega_{\phi}$.
\end{corollary}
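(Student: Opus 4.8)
The plan is to exhibit two \emph{distinct} tilings $U\ne U'$ in $\Omega_\phi$ with $U\approx_s U'$. The engine is Lemma \ref{dec prototiles}, which hands me $i\ne j\in\mathcal{A}$ such that $\rho_i$ and $\rho_j$ are eventually coincident at a dense subset of $spt(\rho_i)\cap spt(\rho_j)=[0,\delta]$, $\delta:=\min\{\omega_i,\omega_j\}$. Replacing $\phi$ by a power (which alters neither $\Omega_\phi$, the prototiles $\rho_k$, the relation $\approx_s$, nor the dense eventual coincidence of $\rho_i,\rho_j$ — eventual coincidence is an open condition preserved under monotonicity in the exponent), I may assume $\phi(a)=a\cdots$ for every $a$ and $\phi(a)=\cdots c$ for a fixed letter $c$ with $\phi(c)=\cdots c$. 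Since $c$ is terminal in every $\phi(a)$ while $i,j$ are initial letters, the words $ci,cj$ lie in $\mathcal{L}$ (an occurrence $ai\in\mathcal{L}$ produces $ci$ inside $\phi(ai)$), so $\{\rho_c-\omega_c,\rho_i\}$ and $\{\rho_c-\omega_c,\rho_j\}$ are admissible and are contained in their own $\Phi$-images; I let $U:=\bigcup_m\Phi^m(\{\rho_c-\omega_c,\rho_i\})$ and $U':=\bigcup_m\Phi^m(\{\rho_c-\omega_c,\rho_j\})$ be the resulting $\Phi$-fixed tilings in $\Omega_\phi$.

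First I would record the easy structural facts. As $\Phi^m(\rho_c-\omega_c)$ has support $[-\lambda^m\omega_c,0]$ and is common to both constructions, $U$ and $U'$ agree on all of $(-\infty,0)$; since $\rho_i\ne\rho_j$ they differ at the origin, so $U\ne U'$. Agreement on a left half-line makes $U$ and $U'$ proximal (translate far to the left, so the agreeing region swallows larger and larger neighborhoods of the origin), hence $\pi_{max}(U)=\pi_{max}(U')$ and $U\sim_{srp}U'$. The whole remaining task is to prove that $U$ and $U'$ are densely eventually coincident.

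For $\Phi$-fixed tilings this becomes transparent, since $\Phi^n(U-t)=U-\lambda^n t$: the pair $U,U'$ is eventually coincident at $t$ exactly when $\lambda^n t$ lies, for some $n\ge0$, in the open set $G:=\{s:B_0[U-s]=B_0[U'-s]\}$ on which $U,U'$ carry the same tile. Three observations drive the argument. (i) $G\supseteq(-\infty,0)$, so every $t<0$ is a point of eventual coincidence. (ii) For $x\in(0,\delta)$ one has $\Phi^m(\rho_i)\subset U$ and $\Phi^m(\rho_j)\subset U'$, and these sub-patches determine the tiles at $\lambda^m x$; hence eventual coincidence of $\rho_i,\rho_j$ at $x$ transfers to eventual coincidence of $U,U'$ at $x$, and the set of such $t\in(0,\delta)$ is dense in $(0,\delta)$. (iii) $G\cap(0,\infty)$ is invariant under $t\mapsto\lambda t$: if $U,U'$ carry the common tile $\tau$ at $g>0$, then they share the patch $\Phi(\tau)\subset U\cap U'$, which contains their (common) tile at $\lambda g$, so $\lambda g\in G$.

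The main point — and the only genuine obstacle — is to pass from coincidence \emph{near the origin} to coincidence \emph{throughout} $\R$, since $\approx_s$ demands density in all of $\R$ and the raw input from Lemma \ref{dec prototiles} is localized in $(0,\delta)$. Observation (iii) supplies exactly the missing mechanism. Writing $C$ for the open set of $t$ at which $U,U'$ are eventually coincident, one checks $t\in C\cap(0,\infty)\Rightarrow\lambda t\in C$ (push the witnessing coincidence forward by one inflation, using (iii)) and $t\in C\Rightarrow t/\lambda\in C$, so $C\cap(0,\infty)$ is invariant under $t\mapsto\lambda t$; the finitely many tile-boundary values excluded here are harmless because $C$ is open. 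By (ii) this set is dense in $(0,\delta)$, hence $C\cap(0,\infty)=\lambda^{p}\big(C\cap(0,\infty)\big)$ is dense in $(0,\lambda^{p}\delta)$ for every $p\ge0$, therefore dense in $(0,\infty)$; with (i) this makes $C$ dense in $\R$. Thus $U$ and $U'$ are densely eventually coincident and strongly regionally proximal, i.e.\ $U\approx_s U'$ with $U\ne U'$, and $\approx_s$ is nontrivial on $\Omega_\phi$.
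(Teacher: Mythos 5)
Your proposal is correct and takes essentially the same approach as the paper: the paper's proof consists of exactly your construction, taking the $\Phi$-periodic tilings $U_i,U_j$ containing $\rho_c-\omega_c,\rho_i$ and $\rho_c-\omega_c,\rho_j$ with $i,j$ from Lemma \ref{dec prototiles}, and asserting $U_i\ne U_j$ and $U_i\approx_s U_j$. Your additional verifications (strong regional proximality via agreement on the left half-line, and dense eventual coincidence on $(0,\infty)$ by propagating the dense coincidence on $(0,\delta)$ under $t\mapsto\lambda t$) simply supply the details the paper leaves implicit, using the same inflation-propagation device that already appears inside the proof of Lemma \ref{dec prototiles}.
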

\begin{proof} Say $\phi(a)=\cdots c$ for all $a\in\mathcal{A}$. Let $\rho_i,\rho_j$ be as in Lemma \ref{dec prototiles} and let $U_i,U_j\in\Omega_{\phi}$ be the $\Phi$-periodic tilings with $\rho_c-\omega_c,\rho_i\in U_i$ and $\rho_c-\omega_c,\rho_j\in U_j$. Then $U_i\ne U_j$ and $U_i\approx_s U_j$.
\end{proof}

\begin{Theorem}\label{main theorem} Suppose that $\phi$ is primitive, Pisot, injective on initial letters, and constant on final letters. Then $(\Omega_{\phi},\R)$ has pure discrete spectrum.
\end{Theorem}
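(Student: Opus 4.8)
The plan is to run the descent argument sketched in the introduction, feeding the structural results proved above into a single proof by contradiction. Since a periodic system has pure discrete spectrum automatically, I lose nothing by supposing, toward a contradiction, that $(\Omega_\phi,\R)$ does \emph{not} have pure discrete spectrum; this hypothesis in particular forces $\phi$ to be non-periodic, so all of the preceding machinery (in which $\Phi$ is a homeomorphism) is available.

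Granting that hypothesis, I would first invoke Theorem \ref{structure} together with Lemma \ref{shape of phi sub s} to pass to the quotient substitution $\phi_s$. Lemma \ref{shape of phi sub s} certifies that $\phi_s$ stays inside the class under study: it is again primitive, Pisot, and constant on final letters, and, because $\phi$ is injective on initial letters, injective on initial letters too. Theorem \ref{structure} then supplies the two facts I need about the quotient, namely that $\approx_s$ is \emph{trivial} on $\Omega_{\phi_s}$, and that $(\Omega_{\phi_s},\R)$ still fails to have pure discrete spectrum. I would read the second fact as saying $\phi_s$ is non-periodic (periodicity would give pure discrete spectrum), so that $\phi_s$ meets all four hypotheses of Corollary \ref{nontrivial}.

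The contradiction is then one line: Corollary \ref{nontrivial}, applied with $\phi_s$ in the role of $\phi$, asserts that $\approx_s$ is \emph{nontrivial} on $\Omega_{\phi_s}$, which is incompatible with the triviality just obtained from Theorem \ref{structure}. Hence the supposition is untenable and $(\Omega_\phi,\R)$ has pure discrete spectrum.

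I expect essentially no obstacle in this final assembly; the real work is already spent in the lemmas it quotes. The delicate point one must respect is that the descent $\phi \rightsquigarrow \phi_s$ is guaranteed to remain within the class \emph{and} to trivialize $\approx_s$ only because the equivalence classes of $\bar R$ are finite (Lemma \ref{finite}), and that finiteness is exactly where constancy on final letters enters. Within the capstone step itself the only thing to get right is the logical bookkeeping: the single hypothesis ``no pure discrete spectrum'' is what keeps $\phi_s$ non-periodic and hence eligible for Corollary \ref{nontrivial}, so that the two opposing verdicts on $\approx_s$ over $\Omega_{\phi_s}$ are forced to collide.
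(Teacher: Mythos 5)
Your proposal is correct and follows essentially the same route as the paper's own proof: assume failure of pure discrete spectrum, use Lemma \ref{shape of phi sub s} and Theorem \ref{structure} to pass to $\phi_s$ with $\approx_s$ trivial on $\Omega_{\phi_s}$ but spectrum still not pure discrete, deduce non-periodicity of $\phi_s$ from that spectral failure, and collide with Corollary \ref{nontrivial}. Your extra observation that the contradiction hypothesis also forces $\phi$ itself to be non-periodic (so that the homeomorphism machinery for $\Phi$ applies) is a small bookkeeping point the paper leaves implicit, and is a welcome addition.
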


\begin{proof} Suppose that $(\Omega_{\phi},\R)$ does not have pure discrete spectrum. Then $\phi_s$ is also primitive, Pisot, injective on initial letters, and constant on final letters by Lemma \ref{shape of phi sub s}. By Theorem \ref{structure}, $\approx_s$ is trivial on $\Omega_{\phi_s}$.  If there were a translation periodic tiling in $\Omega_{\phi_s}$, it would follow from primitivity of $\phi_s$ that $(\Omega_{\phi_s},\R)$, being simply translation on a circle, has pure discrete spectrum, which it doesn't by Theorem \ref{structure}. Thus $\phi_s$ is non-periodic. But now Corollary \ref{nontrivial} says $\approx_s$ is nontrivial on $\Omega_{\phi_s}$. Thus $(\Omega_{\phi},\R)$ must have pure discrete spectrum.
\end{proof}

\begin{Remark}\label{Remark} By replacing $\phi$ by $\phi^n$ for appropriate $n\in\N$, the hypothesis in Theorem \ref{main theorem}
that $\phi$ is constant on final letters can be weakened to $\phi$ being eventually constant on final letters. Also, if $\phi$ satisfies the other hypotheses but is not necessarily primitive, let $c$ be the (eventual) last letter of all $\phi(a)$ and let $\mathcal{A}':=\{a\in\mathcal{A}:\exists n\in\N\text{ with }a \text{ occurring in }\phi^n(c)\}$. Then the theorem applies to $\phi':=\phi|_{\mathcal{A}'}$ to conclude that the `minimal core' $(\Omega_{\phi'},\R)$ of $(\Omega_{\phi},\R)$ has pure discrete spectrum.
\end{Remark}

\section{Applications}\label{applications}
\subsection{$\beta$-substitutions}
Given $\beta>1$, let $T_{\beta}:[0,1]\to[0,1]$ be the {\em $\beta$-transformation} defined by $x\mapsto \beta x-\lfloor\beta x\rfloor$.  The number $\beta$ is called a {\em Parry number} if the orbit of 1 under $T_{\beta}$ is finite and a {\em simple Parry number} if $T_{\beta}^n(1)=0$ for some $n\in\N$. All Pisot numbers are Parry numbers (\cite{Ber, Sch}). 

Given a simple Parry number $\beta$ with $n$ (smallest) as above, let $P_i:=[0,T_{\beta}^{i-1}(1)]$ for $i=1,\ldots,n$. Then, for $i=1,\ldots,n-1$, $T_{\beta}$ maps $P_i$ $a_i$ times across $P_1=[0,1]$ and once across $P_{i+1}$, and $T_{\beta}$ maps $P_n$ exactly $a_n$ times across $P_1$. The {\em $\beta$-substitution}, $\phi_{\beta}$, is given by:

\begin{eqnarray*}
\phi_{\beta}(1) &=& 21^{a_1} \\
\phi_{\beta}(2) &=& 31^{a_2} \\
&\vdots&\\
\phi_{\beta}(n) &=& 1^{a_n}.
\end{eqnarray*}

Note that $a_1\ne0\ne a_n$ so that $\phi_{\beta}^n(i)=\cdots 1$ for all $i\in\{1,\ldots,n\}$; hence $\phi_{\beta}$ is eventually constant on final letters.

Taking $\rho_i:=(P_i,i)$, $i=1,\ldots,n$, as prototiles for $\phi_{\beta}$, we define (a.e.) the map $p:\Omega_{\phi_{\beta}}\to \inv T_{\beta}$ by $p(T):=(\ldots,t_{-1},t_0,t_1,\ldots)$ with $t_k$ determined by: $\rho_i-t_k\in\Phi_{\beta}^{-k}(T)$ and $t_k\in int(spt(\rho_i))$ for some $i$. Then $p$ is a metric isomorphism that conjugates the substitution-induced homeomorphism on $\Omega_{\phi_{\beta}}$ with the shift homeomorphism $\hat{T}_{\beta}$ on the inverse limit space $\inv T_{\beta}$. 

Each non-negative real number $x$ has a {\em greedy expansion} in base $\beta$: $$x=\sum_{k=-N}^{\infty}x_k\beta^{-k}$$
with $x_k\in\{0,\ldots,\lfloor\beta\rfloor\}$ satisfying $|x_M-\sum_{k=-N}^Mx_k\beta^{-k}|<\beta^{-M}$ for each $M\ge -N$.
Each such greedy expansion determines $(\ldots,0,0,x_{-N},\ldots,x_{-1},x_0,x_1,\ldots)\in\{0,\ldots,\lfloor\beta\rfloor\}^{\Z}$; the closure of all such sequences is denoted by $\Sigma_{\beta}$ and called the {\em $\beta$-shift}.\footnote{Not to be confused with the substitutive system $\Sigma_{\phi_{\beta}}$.} For $\beta$ a simple Parry number, $\Sigma_{\beta}$ is a subshift of finite type. The map $r:(\ldots,x_{-1},x_0,x_1,\ldots)\mapsto (\ldots,\sum_{k=1}^{\infty}x_{k-1}\beta^{-k}, \sum_{k=1}^{\infty}x_{k}\beta^{-k},\sum_{k=1}^{\infty}x_{k+1}\beta^{-k},\ldots)$ defines a metric isomorphism that conjugates the shift $\sigma$ on $\Sigma_{\beta}$ with $\hat{T}_{\beta}$ on $\inv_{T_{\beta}}$. The map $$g:=\pi_{max}\circ p^{-1}\circ r:\Sigma_{\beta}\to\Omega_{max},$$ where $\pi_{max}:\Omega_{\phi_{\beta}}\to\Omega_{max}$ is the maximal equicontinuous factor map, gives a continuous and bounded-to-one semi-conjugacy between the $\beta$-shift $\sigma$
and the hyperbolic automorphism $(\Phi_{\beta})_{max}:\Omega_{max}\to\Omega_{max}$. This map $g$ has the nice property of being an {\em arithmetical coding}\footnote{The terminology is due to Siderov (\cite{Sid}).}: If $x$ and $x'$ are non-negative real numbers with sequences $\underline{x},\underline{x'}$ and $\underline{x+x'}$ in $\Sigma_{\beta}$ corresponding to the greedy $\beta$-expansions of $x,x'$ and $x+x'$, then $g(\underline{x+x'})=g(\underline{x})+g(\underline{x'})$. Since $\phi_{\beta}$ is primitive, injective on initial letters, and eventually constant on final letters (see Remark \ref{Remark}), it follows from Theorem \ref{main theorem} that, for Pisot simple Parry numbers, the map $\pi_{max}$, and hence the coding $g$, is a.e. one-to-one. (This result appears in \cite{BBK} under the additional hypothesis that the algebraic degree of $\beta$ is greater than $n/p$, $p$ the smallest prime divisor of $n$.)

\begin{corollary} If $\beta$ is a Pisot simple Parry number, then the system $(\Omega_{\phi_{\beta}},\R)$ has pure discrete spectrum.
\end{corollary}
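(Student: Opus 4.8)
The plan is to verify that the $\beta$-substitution $\phi_{\beta}$ meets every hypothesis of Theorem \ref{main theorem}, as extended by Remark \ref{Remark}, and then to invoke that theorem directly. Four properties are needed: primitivity, the Pisot condition on the inflation, injectivity on initial letters, and eventual constancy on final letters. The first two are arithmetic, the latter two purely combinatorial.

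First I would record primitivity and identify the inflation. Each word $\phi_{\beta}(i)$ contains the letter $1$, and for $i<n$ the word $\phi_{\beta}(i)$ begins with the letter $i+1$; iterating shows that some power of the substitution matrix is strictly positive, so $\phi_{\beta}$ is primitive. The substitution matrix is the companion-type matrix whose characteristic polynomial is $x^n - a_1 x^{n-1} - \cdots - a_{n-1}x - a_n$. The Parry relation $1 = \sum_{i=1}^{n} a_i \beta^{-i}$ defining the simple Parry expansion of $1$ rearranges to $\beta^n = a_1\beta^{n-1} + \cdots + a_{n-1}\beta + a_n$, so $\beta$ is the Perron--Frobenius root; thus the inflation $\lambda$ equals $\beta$. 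When $\beta$ is a Pisot number, $\phi_{\beta}$ is therefore a Pisot substitution.

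The remaining hypotheses are read off the defining formulas. The initial-letter map sends $i$ to $i+1$ for $i<n$ and $n$ to $1$, i.e.\ it is the cyclic permutation $1 \mapsto 2 \mapsto \cdots \mapsto n \mapsto 1$, so $\phi_{\beta}$ is injective on initial letters; and since $a_1 \neq 0 \neq a_n$, the relation $\phi_{\beta}^n(i) = \cdots 1$ for every $i$ (already noted in the text) shows that $\phi_{\beta}$ is eventually constant on final letters. Theorem \ref{main theorem}, combined with Remark \ref{Remark} to accommodate eventual rather than exact constancy on final letters, then gives that $(\Omega_{\phi_{\beta}},\R)$ has pure discrete spectrum. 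I anticipate no real obstacle: the corollary is a direct application, the sole substantive input being the identity $\lambda = \beta$ that transfers the Pisot property from $\beta$ to $\phi_{\beta}$. The degenerate case of rational integer $\beta$, where $\Omega_{\phi_{\beta}}$ may collapse to a translation on a circle, is automatically covered, as such a system trivially has pure discrete spectrum.
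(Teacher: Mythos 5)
Your proposal is correct and takes essentially the same route as the paper, which likewise just checks the hypotheses of Theorem \ref{main theorem}: primitivity, injectivity on initial letters (the cyclic initial-letter map), eventual constancy on final letters from $a_1\ne0\ne a_n$ (so $\phi_{\beta}^n(i)=\cdots 1$ for all $i$), inflation equal to $\beta$, and then invokes Remark \ref{Remark} to cover eventual rather than exact constancy. One small slip worth fixing: when $a_i=0$ for some $1<i<n$ the word $\phi_{\beta}(i)=(i+1)$ does \emph{not} contain the letter $1$; primitivity instead follows because the initial letters cycle through the whole alphabet while $a_1\ge1$ gives a loop at the letter $1$ in the substitution graph, making the graph strongly connected and aperiodic.
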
 

\begin{Remark}\label{Remark2} For $\beta$ a Pisot simple Parry number and a unit, the substitutive system $(\Sigma_{\phi_{\beta}},\Z)$ also has pure discrete spectrum provided the substitution $\phi_{\beta}$ is irreducible (that is, $n=d$, $d$ the algebraic degree of $\beta$ - \cite{BKw} or \cite{CS}). But if $\phi_{\beta}$ is reducible this is not necessarily the case as counterexamples of Ei and Ito show (\cite{EI}). In any event, if $\beta$ is a unit, $(\Sigma_{\phi_{\beta}},\Z)$ is measurably conjugate to the system of a map induced by a rotation on the $(d-1)$-torus (\cite{BBK}, Proposition 8.1).
\end{Remark}

\subsection{Arnoux-Rauzy, Brun, and Jacobi-Perron substitutions}

The substitutions of this section arise as finite products of elements taken from certain collections of basic substitutions. The Arnoux-Rauzy substitutions generalize the two-letter Sturmian substitutions and the three-letter Rauzy substitution (see \cite{BFZ} and \cite{CC} for general accounts of these) and the Brun and Jacobi-Perron substitutions come from multidimensional continued fraction algorithms (see \cite{Be} and \cite{S}).

 Let $\mathcal{A}=\{1,\ldots,d\}$ and for each $i\in\mathcal{A}$, let $\sigma_i$ be the substitution on $\mathcal{A}$ defined by $\sigma_i(i)=i$ and $\sigma_i(j)=ji$ for $j\ne i$. Given a word $w=w_1\cdots w_k\in\mathcal{A}^*$ that contains at least one occurrence of each letter of $\mathcal{A}$, let $\sigma_w=\sigma_{w_1}\circ\cdots\circ\sigma_{w_k}$. Such a $\sigma_w$ is called an {\em Arnoux-Rauzy substitution}. Arnoux and Ito (\cite{AI}) first proved that all Arnoux-Rauzy substitutions on 2 or 3 letters are irreducible Pisot and recently Avila and Delecroix have proved that  Arnoux-Rauzy substitutions on any number of letters are irreducible Pisot (\cite{AD}). The following (assuming Pisot) is proved in \cite{BSW} by analyzing balanced pairs and a rather different proof, for $d=3$,  has been given by Berth\'{e}, Jolivet, and Siegel (\cite{BJS}). It is easily verified that Arnoux-Rauzy substitutions are primitive, injective on initial letters, and 
constant on final letters.

\begin{corollary} \label{AR} If $\sigma_w$ is an Arnoux-Rauzy substitution, then $(\Omega_{\sigma_w},\R)$ has pure discrete spectrum.
\end{corollary}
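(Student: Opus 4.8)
The plan is to recognize Corollary \ref{AR} as an essentially immediate consequence of Theorem \ref{main theorem}, once the three structural hypotheses of that theorem are verified for an Arnoux-Rauzy substitution $\sigma_w$. The text tells us that these substitutions are (assuming the Avila-Delecroix result) irreducible Pisot, so the inflation is a Pisot number and $\sigma_w$ is a Pisot substitution. It remains to check that $\sigma_w$ is primitive, injective on initial letters, and constant on final letters; then Theorem \ref{main theorem} applies verbatim to give pure discrete spectrum for $(\Omega_{\sigma_w},\R)$.

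First I would unwind the definition of the basic substitutions $\sigma_i$ and of the composed substitution $\sigma_w=\sigma_{w_1}\circ\cdots\circ\sigma_{w_k}$. The key observation about each $\sigma_i$ is that it fixes the letter $i$ and sends every other letter $j$ to the two-letter word $ji$, so $\sigma_i$ is injective on initial letters (the initial letter of $\sigma_i(j)$ is $j$ itself, hence the map $j\mapsto j$ is the identity, trivially injective) and constant on final letters (every $\sigma_i(j)$ ends in $i$, and $\sigma_i(i)=i$ also ``ends'' in $i$). To handle the composition, I would verify that both properties are preserved under composition of such morphisms: if $\psi$ and $\chi$ are each injective on initial letters and constant on final letters, then so is $\psi\circ\chi$. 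For the initial-letter property one tracks how the first letter of $\chi(a)$ determines, via injectivity of $\psi$ on initial letters, the first letter of $\psi(\chi(a))$; for the final-letter property one uses that $\chi(a)$ ends in a letter independent of $a$, and then applies $\psi$ to that fixed terminal letter.

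Next I would address primitivity. Since $w$ contains at least one occurrence of each letter of $\mathcal{A}$, I expect that some power of the abelianization of $\sigma_w$ is strictly positive; the cleanest route is to observe that the composed abelianization mixes all letters because each $\sigma_i$ in the product contributes occurrences of $i$ into the images of all other letters, and running over all letters $w_1,\ldots,w_k$ (which cover the alphabet) forces every letter to appear in the image of every letter after enough iterations. Alternatively, I would cite the fact, standard for these generators, that $\sigma_w$ with $w$ covering $\mathcal{A}$ is primitive. One also needs non-periodicity, which follows from irreducibility and the Pisot property since a periodic tiling would force the inflation to be a rational integer with the wrong minimal polynomial.

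The main obstacle, such as it is, is not deep: it is simply the bookkeeping of confirming that \emph{composition} preserves ``injective on initial letters'' and ``constant on final letters,'' and that the covering hypothesis on $w$ gives primitivity. None of this requires new ideas beyond Theorem \ref{main theorem}; the genuine mathematical content (the Pisot property) is imported from \cite{AI} and \cite{AD}. Once these routine verifications are in place, the corollary follows by a one-line appeal to Theorem \ref{main theorem}. I would therefore write the proof as: ``Arnoux-Rauzy substitutions are primitive, injective on initial letters, and constant on final letters, and are Pisot by \cite{AI}, \cite{AD}; the conclusion is then immediate from Theorem \ref{main theorem}.''
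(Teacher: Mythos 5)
Your proposal is correct and follows essentially the same route as the paper, which likewise deduces Corollary \ref{AR} by citing \cite{AI} and \cite{AD} for the Pisot property, asserting that Arnoux-Rauzy substitutions are primitive, injective on initial letters, and constant on final letters (the paper calls this ``easily verified''), and then applying Theorem \ref{main theorem}. Your explicit verification that both letter conditions pass to compositions of the $\sigma_i$, and your aside on non-periodicity (which Theorem \ref{main theorem} does not actually require as a hypothesis, since its proof handles the periodic case internally), merely fill in details the paper leaves to the reader.
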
 

The {\em Brun substitutions}
$$\sigma_1^{Brun}:1\mapsto 1,\, 2\mapsto 2,\, 3\mapsto 32;$$
$$\sigma_2^{Brun}:1\mapsto 1,\, 2\mapsto 3,\, 3\mapsto 23; \text{ and }$$
$$\sigma_3^{Brun}:1\mapsto 2,\, 2\mapsto 3,\, 3\mapsto 13$$
are discussed  by Berth\'{e}, Bourdon, Jolivet, and Siegel in \cite{BBJS} where they prove that if $w=w_1\cdots w_k\in\{1,2,3\}^*$ is any word containing at least one occurrence of 3 and $\phi=\sigma_{w_1}^{Brun}\circ\cdots\circ\sigma_{w_k}^{Brun}$, then $(\Omega_{\phi},\R)$ has pure discrete spectrum.
\\
The {\em Jacobi-Perron substitutions} are
$$\sigma_{a,b}^{JP}:1\mapsto 3,\,2\mapsto 13^a,\, 3\mapsto23^b$$
for $a,b\in\N_0$. Any product $\phi=\sigma_{a_1,b_1}^{JP}\circ\cdots\circ\sigma_{a_n,b_n}^{JP}$ is irreducible Pisot as long as $0\le a_i\le b_i$ and $b_i\ne0$ for $i=1,\ldots,n$ (\cite{DFP}). It is proved in \cite{BBJS} that, for such $\phi$, $(\Omega_{\phi},\R)$ has pure discrete spectrum.

The results for the Brun and Jacobi-Perron substitutions follow immediately from Theorem \ref{main theorem} and Remark \ref{Remark}.

\end{document}